\theoremstyle{plain}
\newtheorem{theorem}{Theorem}
\newtheorem{corollary}{Corollary}
\theoremstyle{definition}
\newtheorem{definition}{Definition}
\theoremstyle{remark}
\newtheorem{remark}{Remark}
\numberwithin{equation}{section} 
\begin{document}
\title[On A New Type Multivariable Hypergeometric Functions]{On A New Type
Multivariable Hypergeometric Functions}
\author{D. Korkmaz-Duzgun}
\address{Kafkas University\\
Faculty of Economics and Administrative Sciences\\
Department of Business Administration\\
Central Campus TR-36100 Kars\\
Turkey}
\email{dgkorkmaz@gazi.edu.tr, dryekorkmaz@gmail.com}
\author{ E. Erku\c{s}-Duman}
\address{Gazi University\\
Faculty of Science\\
Department of Mathematics\\
Teknikokullar TR-06500 Ankara\\
Turkey}
\email{eduman@gazi.edu.tr, eerkusduman@gmail.com}

\begin{abstract}
In this paper, we define a new type multivariable hypergeometric function.
Then, we obtain some generating functions for these functions. Furthermore,
we derive various families of multilinear and multilateral generating
functions for these multivariable hypergeometric functions and their special
cases are also given.
\end{abstract}

\subjclass[2010]{33C65}
\keywords{hypergeometric function, generating function, Horn functions,
Lauricella functions, Appell functions}
\maketitle

\section{Introduction}

Multivariable hypergeometric functions have a great importance in special
functions theory. A lot of integral representations and transformations
obtained, generating functions of which used in a lot of scopes has been
studied, for these functions.

A natural generalization of an arbitrary number of $p$\ numerator and $q$\
denominator parameters $(p,q\in N_{0}=\left \{ 0\right \} \cup \mathbb{N})$
is called and denoted by the generalized hypergeometric series $_{p}F_{q} $\
defined by \cite{SM} 
\begin{align}
_{p}F_{q}\left[ 
\begin{array}{c}
\alpha _{1},...,\alpha _{p}; \\ 
\beta _{1},...,\beta _{q};%
\end{array}
\ \ z\right] &= \sum \limits_{n=0}^{\infty }\frac{(\alpha _{1})_{n}\ ...\
(\alpha _{p})_{n}}{(\beta _{1})_{n}\ ...\ (\beta _{q})_{n}}\frac{z^{n}}{n!}
\label{a1} \\
&= _{p}F_{q}\left( \alpha _{1},...,\alpha _{p};\beta
_{1},...,\beta_{q};z\right) .  \notag
\end{align}
where $(\lambda )_{\nu }$ denotes the Pochhammer symbol defined (in terms of
gamma function) by 
\begin{align*}
(\lambda )_{\nu } &= \frac{\Gamma (\lambda +\nu )}{\Gamma (\lambda )}\ \
(\lambda \in \mathbb{C}\setminus \mathbb{Z}_{0}^{-}) \medskip \\
&=\left \{ 
\begin{array}{ll}
1, & \text{ if }\nu =0;\text{ }\lambda \in \mathbb{C}\backslash \{0\} \\ 
\lambda (\lambda +1)...(\lambda +n-1), & \text{ if }\nu =n\in \mathbb{N}; 
\text{ }\lambda \in \mathbb{C}%
\end{array}
\right. ,
\end{align*}
$Z_{0}^{-}$\ denotes the set of nonpositive integers, and $\Gamma (\lambda )$
is gamma function. In (\ref{a1}), the absence of parameters $p$ or $q$ is
emphasized by a dash (interpreting an empty product as 1). For example, if
no numerator or denominator parameters are present, i.e., $p=q=0$, the
result is $_{0}F_{0}\left( -;-;z\right) =\sum\limits_{n=0}^{\infty }\frac{%
z^{n}}{n!}$ similarly if $p=0$ and $q=1$, then $_{0}F_{1}\left( -;\beta
;z\right) =\sum\limits_{n=0}^{\infty }\frac{z^{n}}{(\beta )_{n}\ n!}$ and
finally if $p=1 $ and $q=0$, then $_{1}F_{0}\left( \alpha ;-;z\right)
=\sum\limits_{n=0}^{\infty }\frac{(\alpha )_{n}\ z^{n}}{n!}$.

In (\ref{a1}), if we choose $p=2$, $q=1$, we get hypergeometric function and
if we also set $p=1$, $q=1$, we obtain confluent hypergeometric function $%
\Phi $.

On the other hand, the second kind Appell functions, the first kind
Lauricella functions, the fourth kind Horn functions and the multivariable
fourth kind Horn functions \cite{SM,H} that like respectively, 
\begin{align*}
F_{2}&\left( {\small \alpha ,\beta }_{1}{\small ,\beta }_{2}{\small ;\gamma }%
_{1}{\small ,\gamma }_{2}{\small ;x}_{1}{\small ,x}_{2}\right)
=\sum\limits_{m,r=0}^{\infty }{\small (\alpha )}_{m+r}\frac{(\beta
_{1})_{m}(\beta _{2})_{r}}{(\gamma _{1})_{m}(\gamma _{2})_{r}}\frac{x_{1}^{m}%
}{m!}\frac{x_{2}^{r}}{r!}, \\
&\left( \left \vert x_{1}\right \vert +\left \vert x_{2}\right \vert
<1\right)
\end{align*}
and 
\begin{align*}
F_{A}^{\left( r\right) }&\left( {\small \alpha ,\beta }_{1}{\small %
,...,\beta }_{r}{\small ;\gamma }_{1}{\small ,...,\gamma }_{r}{\small ;x}_{1}%
{\small ,...,x}_{r}\right) =\sum \limits_{m_{1},...,m_{r}=0}^{\infty }%
{\small (\alpha )}_{m_{1}+...+m_{r}}\frac{(\beta _{1})_{m_{1}}...(\beta
_{r})_{m_{r}}}{(\gamma_{1})_{m_{1}}...(\gamma _{r})_{m_{r}}}\frac{%
x_{1}^{m_{1}}}{m_{1}!}{\small ...}\frac{x_{r}^{m_{r}}}{m_{r}!}, \\
&\ \ \ \ \left( \left \vert x_{1}\right \vert +...+\left \vert x_{r}\right
\vert <1\right)
\end{align*}
and 
\begin{align*}
H_{4}&\left( \alpha ,\beta ;\gamma _{1},\gamma _{2};x_{1},x_{2}\right)
=\sum\limits_{m,r=0}^{\infty }\frac{(\alpha )_{2m+r}\ (\beta )_{r}}{%
(\gamma_{1})_{m}(\gamma _{2})_{r}}\frac{x_{1}^{m}}{m!}\frac{x_{2}^{r}}{r!},
\\
&\ \ \ \left( 2\sqrt{\left \vert x_{1}\right \vert }+\left \vert x_{2}\right
\vert <1\right)
\end{align*}
and 
\begin{align*}
^{(k)}H_{4}^{(r)}&\left( \alpha ,\beta _{k+1},...,\beta
_{r};\gamma_{1},...,\gamma _{r};x_{1},...,x_{k},x_{k+1},...,x_{r}\right) \\
&=\sum \limits_{m_{1},...,m_{r}=0}^{\infty }\frac{%
(\alpha)_{2(m_{1}+...+m_{k})+m_{k+1}+...+m_{r}}~(\beta
_{k+1})_{m_{k+1}}...(\beta_{r})_{m_{r}}}{(\gamma _{1})_{m_{1}}...(\gamma
_{r})_{m_{r}}}\frac{x_{1}^{m_{1}}}{m_{1}!}...\frac{x_{r}^{m_{r}}}{m_{r}!}, \\
&\ \ \ \left( 2(\sqrt{\left \vert x_{1}\right \vert }+...+\sqrt{\left \vert
x_{k}\right \vert })+...+\left \vert x_{r}\right \vert <1\right) .
\end{align*}

In this study primarily we define a new type multivariable hypergeometric
function, then we obtain generating functions. Last, we derive families of
multilinear and multilateral generating functions for the multivariable
hypergeometric functions and their conclusions are also given.


\section{Generating Functions}

In this section, we define a new type multivariable hypergeometric function
and we obtain generating functions for these functions.

\begin{definition}
\label{def:1} We define a new type multivariable hypergeometric function as
below: 
\begin{align}
^{\left( k\right) }E^{\left( r\right) }&\left( {\small \alpha ,\beta }_{k+1}%
{\small ,...,\beta }_{r}{\small ;\gamma }_{1}{\small ,...,\gamma }_{r}%
{\small ;x}_{1}{\small ,...,x}_{r}\right)  \label{b1} \\
:&=\sum \limits_{m_{1},...,m_{r}=0}^{\infty }{\small (\alpha )}_{\rho\left(
m_{1}+...+m_{k}\right) +m_{k+1}+...+m_{r}}\frac{(\beta_{k+1})_{m_{k+1}}...(%
\beta _{r})_{m_{r}}}{(\gamma _{1})_{m_{1}}...(\gamma_{r})_{m_{r}}}\frac{%
x_{1}^{m_{1}}}{m_{1}!}{\small ...}\frac{x_{r}^{m_{r}}}{m_{r}!},  \notag
\end{align}
where $\rho \left( \sqrt{\left \vert x_{1}\right \vert }+...+\sqrt{%
\left\vert x_{k}\right \vert }\right) +\left \vert x_{k+1}\right \vert
+...+\left\vert x_{r}\right \vert <1$, $\rho \geqslant 0.$
\end{definition}

\begin{theorem}
\label{thm:1} We have the following generating function for the
multivariable hypergeometric function $^{\left( k\right) }E^{\left( r\right)
}$ defined by (\ref{b1}): 
\begin{align}
&\sum \limits_{n=0}^{\infty }\frac{(\lambda )_{n}}{n!}\ ^{\left( k\right)}E\
^{\left( r\right) }\left( {\small \lambda +n,\beta }_{k+1}{\small ,...,\beta 
}_{r}{\small ;\gamma }_{1}{\small ,...,\gamma }_{r}{\small ;x}_{1}{\small %
,...,x}_{r}\right) t^{n} =(1-t)^{-\lambda }  \notag  \label{b2} \\
&\ \ \times \text{ }^{\left( k\right) }E\ ^{\left( r\right) }\left( {\small %
\lambda ,\beta }_{k+1}{\small ,...,\beta }_{r}{\small ;\gamma }_{1}{\small %
,...,\gamma }_{r}{\small ;\frac{x_{1}}{\left( 1-t\right) ^{\rho }},...,\frac{%
x_{k}}{\left( 1-t\right) ^{\rho }},\frac{x_{k+1}}{\left( 1-t\right) },...,}%
\frac{x_{r}}{\left( 1-t\right) }\right)  \notag
\end{align}
where $\lambda \in \mathbb{C},\ \ \left \vert t\right \vert <1$.
\end{theorem}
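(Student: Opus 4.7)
The plan is to proceed by the classical route for such ``Brown–Srivastava type'' generating-function identities: expand the left-hand side using Definition~\ref{def:1}, recombine the Pochhammer symbols, interchange the $n$-summation with the multi-index sum, and finally apply the binomial theorem to collapse the $t$-series back into a rescaled copy of $^{(k)}E^{(r)}$.

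First I would write, with the abbreviation
\[
N := \rho(m_{1}+\cdots+m_{k})+m_{k+1}+\cdots+m_{r},
\]
the left-hand side as the double sum
\[
\sum_{n=0}^{\infty}\sum_{m_{1},\ldots,m_{r}=0}^{\infty}
\frac{(\lambda)_{n}\,(\lambda+n)_{N}}{n!}
\frac{(\beta_{k+1})_{m_{k+1}}\cdots(\beta_{r})_{m_{r}}}{(\gamma_{1})_{m_{1}}\cdots(\gamma_{r})_{m_{r}}}
\prod_{j=1}^{r}\frac{x_{j}^{m_{j}}}{m_{j}!}\;t^{n}.
\]
The key algebraic step is to use the Pochhammer identity $(\lambda)_{n}(\lambda+n)_{N}=(\lambda)_{n+N}=(\lambda)_{N}(\lambda+N)_{n}$, which is valid for arbitrary complex $N$ when read in its gamma-function form (this takes care of the possibility that $\rho$ is not an integer, so that $N$ need not be an integer either).

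Next I would swap the order of summation, pulling the $m_{i}$-sum outside, and treat the inner $n$-series by the binomial theorem:
\[
\sum_{n=0}^{\infty}\frac{(\lambda+N)_{n}}{n!}\,t^{n}=(1-t)^{-(\lambda+N)},\qquad |t|<1.
\]
This produces a factor $(1-t)^{-\lambda}(1-t)^{-N}$, and splitting $(1-t)^{-N}$ as
\[
\prod_{j=1}^{k}(1-t)^{-\rho m_{j}}\prod_{j=k+1}^{r}(1-t)^{-m_{j}}
\]
lets each $(1-t)$-power be absorbed into the corresponding $x_{j}^{m_{j}}$. Comparing the resulting series with Definition~\ref{def:1} (with $\alpha$ specialized to $\lambda$ and $x_{j}$ replaced by $x_{j}/(1-t)^{\rho}$ for $j\le k$ and $x_{j}/(1-t)$ for $j>k$) then gives exactly the right-hand side.

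I expect the only genuine obstacle to be the justification of the interchange of summations: one needs $|t|<1$ together with the convergence condition $\rho(\sqrt{|x_{1}|}+\cdots+\sqrt{|x_{k}|})+|x_{k+1}|+\cdots+|x_{r}|<1$ to hold after the $(1-t)$-rescaling, which for $|t|$ sufficiently small follows from continuity, and then the identity extends to the full disc $|t|<1$ by the identity principle for analytic functions. Everything else is a bookkeeping exercise with Pochhammer symbols.
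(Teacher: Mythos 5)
Your proposal is correct and follows essentially the same route as the paper's own proof: expand via Definition~\ref{def:1}, use $(\lambda)_{n}(\lambda+n)_{N}=(\lambda)_{N}(\lambda+N)_{n}$, sum the inner $t$-series by the binomial theorem, and absorb $(1-t)^{-N}$ into the arguments. Your additional remarks on the non-integer case of $N$ and on justifying the interchange of summations are sound refinements that the paper passes over silently.
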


\begin{proof}
Let $T$\ denote the first member of assertion (\ref{b2}),

\begin{equation*}
T=\sum \limits_{n=0}^{\infty }\frac{(\lambda )_{n}}{n!}\ ^{\left(
k\right)}E\ ^{\left( r\right) }\left( {\small \lambda +n,\beta }_{k+1}%
{\small ,...,\beta }_{r}{\small ;\gamma }_{1}{\small ,...,\gamma }_{r}%
{\small ;x}_{1}{\small ,...,x}_{r}\right) t^{n}
\end{equation*}
from Pochhammer symbol properties, 
\begin{align*}
T =&\sum \limits_{m_{1},...,m_{r}=0}^{\infty }\left( \lambda \right)_{\rho
\left( m_{1}+...+m_{k}\right) +...+m_{k+1}+...+m_{r}}\frac{%
(\beta_{k+1})_{m_{k+1}}...(\beta _{r})_{m_{r}}}{(\gamma
_{1})_{m_{1}}...(\gamma _{r})_{m_{r}}} \\
&\times \frac{\left( x_{1}\right) ^{m_{1}}}{m_{1}!}...\frac{%
\left(x_{r}\right) ^{m_{r}}}{m_{r}!}\sum \limits_{n=0}^{\infty }\frac{%
(\lambda+\rho (m_{1}+...+m_{k})+m_{k+1}+...+m_{r})_{n}\ t^{n}}{(n)!} \\
&=(1-t)^{-\lambda }\sum \limits_{m_{1},...,m_{r}=0}^{\infty }\left(
\lambda\right) _{\rho \left( m_{1}+...+m_{k}\right) +...+m_{k+1}+...+m_{r}}%
\frac{(\beta _{k+1})_{m_{k+1}}...(\beta _{r})_{m_{r}}}{%
(\gamma_{1})_{m_{1}}...(\gamma _{r})_{m_{r}}} \\
&\times \left( \frac{x_{1}}{\left( 1-t\right) ^{\rho }}\right)^{m_{1}}...%
\left( \frac{x_{k}}{\left( 1-t\right) ^{\rho }}\right)^{m_{k}}\left( \frac{%
x_{k+1}}{\left( 1-t\right) }\right) ^{m_{k+1}}...\left(\frac{x_{r}}{\left(
1-t\right) }\right) ^{m_{r}}\frac{1}{m_{1}!}...\frac{1}{m_{r}!} \\
&=(1-t)^{-\lambda }\text{ }^{\left( k\right) }E\ ^{\left( r\right) }\left( 
{\small \lambda ,\beta }_{k+1}{\small ,...,\beta }_{r}{\small ;\gamma }_{1}%
{\small ,...,\gamma }_{r}{\small ;\frac{x_{1}}{\left( 1-t\right) ^{\rho }}
,...,\frac{x_{k}}{\left( 1-t\right) ^{\rho }},\frac{x_{k+1}}{%
\left(1-t\right) },...,}\frac{x_{r}}{\left( 1-t\right) }\right)
\end{align*}
which completes the proof.
\end{proof}

If we set $\rho =2$\ in (\ref{b1}), we clearly see that the multivariable
hypergeometric functions are a generalization of the multivariable fourth
kind Horn functions $\ ^{\left( k\right) }H_{4}^{\left( r\right) }$ \cite{H}%
: 
\begin{align*}
^{\left( k\right) }E& ^{\left( r\right) }\left( {\small \alpha ,\beta }_{k+1}%
{\small ,...,\beta }_{r}{\small ;\gamma }_{1}{\small ,...,\gamma }_{r}%
{\small ;x}_{1}{\small ,...,x}_{r}\right) \\
& =\ ^{\left( k\right) }H\ _{4}^{\left( r\right) }\left( {\small \alpha
,\beta }_{k+1}{\small ,...,\beta }_{r}{\small ;\gamma }_{1}{\small %
,...,\gamma }_{r}{\small ;x}_{1}{\small ,...,x}_{r}\right) .
\end{align*}

\begin{corollary}
\label{cor:1} If we take $\rho =2$ in Theorem ~\ref{thm:1}, then we have the
following relation for the multivariable fourth kind Horn functions\ \cite{H}%
: 
\begin{align*}
\sum \limits_{n=0}^{\infty }&\frac{(\lambda )_{n}}{n!}~^{\left( k\right)
}H_{4}^{\left( r\right) }\left( {\small \lambda +n,\beta }_{k+1}{\small %
,...,\beta }_{r}{\small ;\gamma }_{1}{\small ,...,\gamma }_{r}{\small ;x}_{1}%
{\small ,...,x}_{r}\right) t^{n} =(1-t)^{-\lambda }~^{\left( k\right) } \\
& \times H\ _{4}^{\left( r\right) }\left({\small \lambda ,\beta }_{k+1}%
{\small ,...,\beta }_{r}{\small ;\gamma }_{1}{\small ,...,\gamma }_{r}%
{\small ;\frac{x_{1}}{\left( 1-t\right) ^{2}},...,\frac{x_{k}}{\left(
1-t\right) ^{2}},\frac{x_{k+1}}{\left( 1-t\right) },...,}\frac{x_{r}}{\left(
1-t\right) }\right) ,
\end{align*}
where $\lambda \in \mathbb{C}$ and $\left \vert t\right \vert <1$.
\end{corollary}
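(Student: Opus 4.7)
The plan is to obtain this corollary as a direct specialization of Theorem~\ref{thm:1} rather than to redo the generating-function manipulation from scratch. The key observation that makes this almost immediate is already recorded in the paragraph preceding the corollary: when $\rho=2$ is substituted into the defining series (\ref{b1}), the Pochhammer argument $\rho(m_{1}+\cdots+m_{k})+m_{k+1}+\cdots+m_{r}$ collapses to $2(m_{1}+\cdots+m_{k})+m_{k+1}+\cdots+m_{r}$, which is precisely the Pochhammer index appearing in the series for $^{(k)}H_{4}^{(r)}$. Hence $^{(k)}E^{(r)} = \ ^{(k)}H_{4}^{(r)}$ as formal power series in $x_{1},\ldots,x_{r}$ whenever $\rho=2$.

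Given that identification, the first step is simply to apply Theorem~\ref{thm:1} with $\rho=2$ and with the same $\lambda$ and $t$. On the left-hand side the factors $^{(k)}E^{(r)}(\lambda+n,\beta_{k+1},\ldots;\ldots;x_{1},\ldots,x_{r})$ become $^{(k)}H_{4}^{(r)}(\lambda+n,\beta_{k+1},\ldots;\ldots;x_{1},\ldots,x_{r})$, and on the right-hand side the factor $^{(k)}E^{(r)}(\lambda,\beta_{k+1},\ldots;\ldots;x_{1}/(1-t)^{\rho},\ldots)$ becomes the corresponding $^{(k)}H_{4}^{(r)}$ evaluated at the arguments $x_{j}/(1-t)^{2}$ for $1\leq j\leq k$ and $x_{j}/(1-t)$ for $k+1\leq j\leq r$. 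This reproduces the displayed identity verbatim.

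The second step is a quick sanity check that the convergence hypotheses of Theorem~\ref{thm:1} remain valid. With $\rho=2$, the admissibility condition $\rho(\sqrt{|x_{1}|}+\cdots+\sqrt{|x_{k}|})+|x_{k+1}|+\cdots+|x_{r}|<1$ of Definition~\ref{def:1} becomes exactly the convergence domain $2(\sqrt{|x_{1}|}+\cdots+\sqrt{|x_{k}|})+|x_{k+1}|+\cdots+|x_{r}|<1$ given for $^{(k)}H_{4}^{(r)}$ in the introduction. The scaled arguments on the right satisfy the analogous inequality for $|t|<1$ in the usual way, so no new analytic issue arises.

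Because the derivation is a pure specialization, I do not anticipate a genuine obstacle; the only point that requires a line of justification is the identification $^{(k)}E^{(r)} = \ ^{(k)}H_{4}^{(r)}$ at $\rho=2$, which the paper has already supplied. The proof therefore reduces to the single sentence: \emph{set $\rho=2$ in Theorem~\ref{thm:1} and use the equality displayed just before the corollary.}
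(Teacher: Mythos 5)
Your proposal is correct and matches the paper's own (implicit) argument exactly: the paper records the identification $^{(k)}E^{(r)}={}^{(k)}H_{4}^{(r)}$ at $\rho=2$ immediately before the corollary and then obtains the corollary as the $\rho=2$ specialization of Theorem~\ref{thm:1}, which is precisely what you do.
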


If we choose $k=1$, $r=2$\ in Corollary ~\ref{cor:1}, we immediately have
the following conclusion for the fourth kind Horn functions \cite{SM}.

\begin{remark}
\label{rem:1} We have 
\begin{align*}
\sum \limits_{n=0}^{\infty }&\frac{(\lambda )_{n}}{n!} H_{4}\left(
\lambda+n,\beta ;\gamma _{1},\gamma _{2};x_{1},x_{2}\right) t^{n}
=(1-t)^{-\lambda } H_{4}\left( \lambda ,\beta ;\gamma _{1},\gamma _{2};\frac{%
x_{1}}{(1-t)^{2}},\frac{x_{2}}{\left( 1-t\right) }\right)
\end{align*}
where $\lambda \in \mathbb{C}$ and $~\left \vert t\right \vert <1$.
\end{remark}

If we set $k=0$\ in Corollary ~\ref{cor:1}, we have the following relation
for the first kind Lauricella functions in \cite{SM}.

\begin{remark}
\label{rem:2} We have 
\begin{align*}
\sum \limits_{n=0}^{\infty }&\frac{(\lambda )_{n}}{n!}~
F_{A}^{\left(r\right) }\left( {\small \lambda +n,\beta }_{1}{\small %
,...,\beta }_{r}{\small ;\gamma }_{1}{\small ,...,\gamma }_{r}{\small ;x}_{1}%
{\small ,...,x}_{r}\right) t^{n} \\
& =(1-t)^{-\lambda } F_{A}^{\left( r\right) }\left( {\small \lambda ,\beta }%
_{1}{\small ,...,\beta }_{r}{\small ;\gamma }_{1}{\small ,...,\gamma }_{r}%
{\small ;\frac{x_{1}}{\left( 1-t\right) },...,}\frac{x_{r}}{\left(1-t\right) 
}\right) ,
\end{align*}
where $\lambda \in \mathbb{C}$ and $\left \vert t\right \vert <1$.
\end{remark}

If we choose $k=0$, $r=2$\ in Corollary \ref{cor:1}, we immediately have the
following conclusion for the second kind Appell functions \cite{SM}.

\begin{remark}
\label{rem:3} We have the following generating function for the second kind
Appell functions: 
\begin{align*}
\sum \limits_{n=0}^{\infty }&\frac{(\lambda )_{n}}{n!} F_{2}\left( \lambda
+n,\beta _{1},\beta _{2};\gamma _{1},\gamma _{2};x_{1},x_{2}\right) t^{n}
=(1-t)^{-\lambda }~F_{2}\left( \lambda ,\beta _{1},\beta
_{2};\gamma_{1},\gamma _{2};\frac{x_{1}}{(1-t)},\frac{x_{2}}{(1-t)}\right) ,
\end{align*}
where $\lambda \in \mathbb{C}$ and $\left \vert t\right \vert <1$.
\end{remark}

\begin{theorem}
\label{thm:2} We have the following generating function for the
multivariable hypergeometric functions $^{\left( k\right) }E^{\left(
r\right) }$ defined by (\ref{b1}): 
\begin{align}
&\sum \limits_{n=0}^{\infty }\frac{(\lambda )_{n}}{n!}~^{\left( k\right)}E\
^{\left( r\right) }\left( {\small -n,\beta }_{k+1}{\small ,...,\beta }_{r}%
{\small ;\gamma }_{1}{\small ,...,\gamma }_{r}{\small ;x}_{1}{\small ,...,x}%
_{r}\right) t^{n} =(1-t)^{-\lambda }  \label{b3} \\
&\times \text{ }^{\left( k\right) }E\ ^{\left( r\right) }\left( {\small %
\lambda ,\beta }_{k+1}{\small ,...,\beta }_{r}{\small ;\gamma }_{1}{\small %
,...,\gamma }_{r}{\small ;\frac{x_{1}\left( -t\right) ^{\rho }}{%
\left(1-t\right) ^{\rho }},...,\frac{x_{k}\left( -t\right) ^{\rho }}{%
\left(1-t\right) ^{\rho }},\frac{-x_{k+1}t}{\left( 1-t\right) },...,}\frac{%
-x_{r}t}{\left( 1-t\right) }\right)  \notag
\end{align}
where $\lambda \in \mathbb{C}$, \ $\left \vert t\right \vert <1$.
\end{theorem}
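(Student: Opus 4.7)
The plan is to mirror the proof of Theorem~\ref{thm:1}: insert the defining series~(\ref{b1}) on the left-hand side, interchange the $n$-summation with the $r$-fold $m$-summation, and evaluate the resulting inner series in closed form via the binomial theorem. The only new ingredient compared with Theorem~\ref{thm:1} is the handling of the Pochhammer symbol $(-n)_{K}$ in place of $(\lambda+n)_{K}$.

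Concretely, let $T$ denote the left-hand side of~(\ref{b3}) and write $K:=\rho(m_{1}+\cdots+m_{k})+m_{k+1}+\cdots+m_{r}$. After substitution and a formal interchange of sums,
\[
T=\sum_{m_{1},\dots,m_{r}=0}^{\infty}\frac{(\beta_{k+1})_{m_{k+1}}\cdots(\beta_{r})_{m_{r}}}{(\gamma_{1})_{m_{1}}\cdots(\gamma_{r})_{m_{r}}}\frac{x_{1}^{m_{1}}}{m_{1}!}\cdots\frac{x_{r}^{m_{r}}}{m_{r}!}\,S(K),
\]
with $S(K):=\sum_{n=0}^{\infty}(\lambda)_{n}(-n)_{K}\,t^{n}/n!$. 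The standard identity $(-n)_{K}=(-1)^{K}n!/(n-K)!$ (zero for $n<K$) allows me to shift $n=K+s$ and to factor $(\lambda)_{K+s}=(\lambda)_{K}(\lambda+K)_{s}$; the Chaundy-type geometric series
\[
\sum_{s=0}^{\infty}\frac{(\lambda+K)_{s}}{s!}\,t^{s}=(1-t)^{-\lambda-K}
\]
then collapses $S(K)$ to $(\lambda)_{K}\bigl(-t/(1-t)\bigr)^{K}(1-t)^{-\lambda}$.

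The final step is to distribute $\bigl(-t/(1-t)\bigr)^{K}$ according to the decomposition $K=\rho(m_{1}+\cdots+m_{k})+m_{k+1}+\cdots+m_{r}$: each of the first $k$ slots absorbs a factor $\bigl((-t)^{\rho}/(1-t)^{\rho}\bigr)^{m_{i}}$ into $x_{i}^{m_{i}}$, while each of the remaining $r-k$ slots absorbs $\bigl(-t/(1-t)\bigr)^{m_{i}}$ into $x_{i}^{m_{i}}$. What remains is precisely the series definition of $^{(k)}E^{(r)}$ with the displaced arguments appearing on the right-hand side of~(\ref{b3}), multiplied by the prefactor $(1-t)^{-\lambda}$.

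I expect the main obstacle to be purely bookkeeping: correctly splitting $K$ into the $\rho$-weighted first $k$ indices and the unit-weighted remaining indices so that the displaced arguments emerge as $x_{i}(-t)^{\rho}/(1-t)^{\rho}$ for $i\leq k$ and $-x_{i}t/(1-t)$ for $i>k$. A secondary point is justifying the interchange of summation, which is routine on any compact subset of the polydisc on which the series for $^{(k)}E^{(r)}$ converges absolutely; on this set the double series is termwise dominated by an absolutely convergent one, so Fubini applies.
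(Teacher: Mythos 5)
Your proposal is correct and follows essentially the same route as the paper: the paper performs your shift $n=K+s$ via the series-rearrangement identity (\ref{b4}) and then sums $\sum_{s}(\lambda+K)_{s}t^{s}/s!=(1-t)^{-\lambda-K}$, which is exactly your closed-form evaluation of $S(K)=(\lambda)_{K}\bigl(-t/(1-t)\bigr)^{K}(1-t)^{-\lambda}$ written out termwise. The only difference is cosmetic (you do the shift by hand on the inner sum and add the routine Fubini justification that the paper omits).
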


\begin{proof}
Let $S$\ denote the first member of assertion (\ref{b3}). Then, 
\begin{align*}
S =&\sum \limits_{n=0}^{\infty }\sum \limits_{m_{1},...,m_{r}=0}^{\rho
(m_{1}+...+m_{k})+m_{k+1}+...+m_{r}\leq n}(\lambda )_{n}(-n)_{\rho
(m_{1}+...+m_{k})+m_{k+1}+...+m_{r}} \\
&\ \ \ \times \frac{(\beta _{k+1})_{m_{k+1}}...(\beta_{r})_{m_{r}}}{(\gamma
_{1})_{m_{1}}...(\gamma _{r})_{m_{r}}}\frac{\left(x_{1}\right) ^{m_{1}}}{%
m_{1}!}...\frac{\left( x_{r}\right) ^{m_{r}}}{m_{r}!}\frac{t^{n}}{n!}
\end{align*}
where we have used the relation \cite[p.102]{SM} 
\begin{align}
\sum \limits_{n=0}^{\infty
}&\sum\limits_{k_{1},...,k_{r}=0}^{m_{1}k_{1}+...+m_{r}k_{r}\leq n}\Phi
\left(k_{1},...,k_{r};n\right) =\sum \limits_{n=0}^{\infty }\sum
\limits_{k_{1},...,k_{r}=0}^{\infty}\Phi
\left(k_{1},...,k_{r};n+m_{1}k_{1}+...+m_{r}k_{r}\right) ,  \label{b4}
\end{align}
and we get 
\begin{align*}
S &=\sum \limits_{m_{1},...,m_{r}=0}^{\infty }\frac{%
(\beta_{k+1})_{m_{k+1}}...(\beta _{r})_{m_{r}}}{(\gamma
_{1})_{m_{1}}...(\gamma_{r})_{m_{r}}} \\
&\times \frac{(x_{1}\left( -t\right) ^{\rho
})^{m_{1}}...(x_{k}\left(-t\right) ^{\rho
})^{m_{k}}(-x_{k+1}t)^{m_{k+1}}...(-x_{r}t)^{m_{r}}}{\left(m_{1}\right)
!...\left( m_{k}\right) !\left( m_{k+1}\right) !...\left(m_{r}\right) !} \\
&\times \sum \limits_{n=0}^{\infty }\frac{(\lambda
)_{n+\rho(m_{1}+...+m_{k})+m_{k+1}+...+m_{r}}t^{n}}{(n)!} \\
&=\sum \limits_{m_{1},...,m_{r}=0}^{\infty }(\lambda
)_{\rho(m_{1}+...+m_{k})+m_{k+1}+...+m_{r}}\frac{(\beta
_{k+1})_{m_{k+1}}...(\beta_{r})_{m_{r}}}{(\gamma _{1})_{m_{1}}...(\gamma
_{r})_{m_{r}}} \\
&\times \frac{(x_{1}\left( -t\right) ^{\rho
})^{m_{1}}...(x_{k}\left(-t\right) ^{\rho
})^{m_{k}}(-x_{k+1}t)^{m_{k+1}}...(-x_{r}t)^{m_{r}}}{\left(m_{1}\right)
!...\left( m_{k}\right) !\left( m_{k+1}\right) !...\left(m_{r}\right) !} \\
&\times \sum \limits_{n=0}^{\infty }\frac{(\lambda
+\rho(m_{1}+...+m_{k})+m_{k+1}+...+m_{r})_{n}\ t^{n}}{(n)!} \\
&=(1-t)^{-\lambda }\sum \limits_{m_{1},...,m_{r}=0}^{\infty }\left(
\lambda\right) _{\rho \left( m_{1}+...+m_{k}\right) +...+m_{k+1}+...+m_{r}}%
\frac{(\beta _{k+1})_{m_{k+1}}...(\beta _{r})_{m_{r}}}{%
(\gamma_{1})_{m_{1}}...(\gamma _{r})_{m_{r}}} \\
& \times \left( \frac{x_{1}\left( -t\right) ^{\rho }}{\left(
1-t\right)^{\rho }}\right) ^{m_{1}}...\left( \frac{x_{k}\left( -t\right)
^{\rho }}{\left( 1-t\right) ^{\rho }}\right) ^{m_{k}}\left( \frac{-x_{k+1}t}{%
\left(1-t\right) }\right) ^{m_{k+1}}...\left( \frac{-x_{r}t}{\left(
1-t\right) }\right) ^{m_{r}}\frac{1}{m_{1}!}...\frac{1}{m_{r}!} \\
&=(1-t)^{-\lambda }\text{ } \\
&\times\ ^{\left( k\right) }E\ ^{\left( r\right) }\left({\small \lambda
,\beta }_{k+1}{\small ,...,\beta }_{r}{\small ;\gamma }_{1}{\small %
,...,\gamma }_{r}{\small ;\frac{x_{1}\left( -t\right) ^{\rho }}{\left(
1-t\right) ^{\rho }},...,\frac{x_{k}\left( -t\right) ^{\rho }}{%
\left(1-t\right) ^{\rho }},\frac{-x_{k+1}t}{\left( 1-t\right) },...,}\frac{%
-x_{r}t}{\left( 1-t\right) }\right)
\end{align*}
which completes the proof.
\end{proof}

\begin{theorem}
\label{thm:3} We have the following generating function for the
multivariable hypergeometric functions $\ ^{\left( k\right) }E\ ^{\left(
r\right) }\ $ defined by (\ref{b1}): 
\begin{align}
\sum\limits_{n=0}^{\infty }& \ ^{\left( k\right) }E\ ^{\left( r\right)
}\left( {\small -n,\beta }_{k+1}{\small ,...,\beta }_{r}{\small ;\gamma }_{1}%
{\small ,...,\gamma }_{r}{\small ;x}_{1}{\small ,...,x}_{r}\right) \frac{%
t^{n}}{n!}  \notag \\
& =e^{t}{}_{0}F_{1}\left( -,\gamma _{1};x_{1}\left( -t\right) ^{\rho
}\right) ...{}_{0}F_{1}\left( -,\gamma _{k};x_{k}\left( -t\right) ^{\rho
}\right)   \notag \\
& \ \ \ \ \times \Phi \left( \beta _{k+1},\gamma _{k+1};-x_{k+1}t\right)
...\Phi \left( \beta _{r},\gamma _{r};-x_{r}t\right) .  \label{b5}
\end{align}%
where $\left\vert t\right\vert <1$\ and $_{0}F_{1}$\ is hypergeometric
series, $\Phi $\ is confluent hypergeometric function.
\end{theorem}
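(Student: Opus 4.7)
The plan is to expand the left-hand side via Definition~\ref{def:1} and then reduce it to a product form by the same swap-of-summation trick used in the proof of Theorem~\ref{thm:2}. Writing $L$ for the left-hand side of (\ref{b5}) and setting $J := \rho(m_1+\cdots+m_k)+m_{k+1}+\cdots+m_r$, I would begin with
\begin{equation*}
L = \sum_{n=0}^\infty \sum_{m_1,\ldots,m_r=0}^\infty (-n)_{J}\,
\frac{(\beta_{k+1})_{m_{k+1}}\cdots(\beta_r)_{m_r}}{(\gamma_1)_{m_1}\cdots(\gamma_r)_{m_r}}\,
\frac{x_1^{m_1}}{m_1!}\cdots\frac{x_r^{m_r}}{m_r!}\,\frac{t^n}{n!}.
\end{equation*}
Since $(-n)_J$ vanishes whenever $J>n$, the inner $m$-sum is in fact restricted to $J\le n$, so identity (\ref{b4}) applies: I relabel $n \mapsto n+J$ so that the range becomes free.

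After the shift the Pochhammer factor simplifies by $(-(n+J))_{J}=(-1)^{J}(n+J)!/n!$, and this cancels against the $(n+J)!$ in the denominator coming from $t^{n+J}/(n+J)!$. What survives from the $n$-dependence is $(-t)^{J}\,t^n/n!$, and since $J$ depends only on the $m_i$'s the sum separates cleanly:
\begin{equation*}
L = \left(\sum_{n=0}^\infty \frac{t^n}{n!}\right)
\sum_{m_1,\ldots,m_r=0}^\infty \frac{(\beta_{k+1})_{m_{k+1}}\cdots(\beta_r)_{m_r}}{(\gamma_1)_{m_1}\cdots(\gamma_r)_{m_r}}\,
\frac{(x_1(-t)^{\rho})^{m_1}}{m_1!}\cdots\frac{(x_k(-t)^{\rho})^{m_k}}{m_k!}\,
\frac{(-x_{k+1}t)^{m_{k+1}}}{m_{k+1}!}\cdots\frac{(-x_rt)^{m_r}}{m_r!}.
\end{equation*}

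The first factor is $e^t$. The second factor is a product of $r$ independent single-index series: for each $i\in\{1,\ldots,k\}$ the $m_i$-sum has no Pochhammer symbol in the numerator and produces ${}_0F_1(-;\gamma_i;x_i(-t)^{\rho})$, while for each $i\in\{k+1,\ldots,r\}$ the presence of $(\beta_i)_{m_i}$ in the numerator and $(\gamma_i)_{m_i}$ in the denominator gives precisely the confluent hypergeometric function $\Phi(\beta_i,\gamma_i;-x_it)$. Assembling these yields the right-hand side of (\ref{b5}).

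The main thing to be careful about is the bookkeeping in the index shift: I need to verify that after replacing $n$ by $n+J$ the factor $(\lambda)_{n}$-type term does not appear (there is no $\lambda$ here, unlike in Theorem~\ref{thm:2}), so the $n$-sum collapses purely to $\sum t^n/n!$ rather than to $(1-t)^{-\lambda}$. Beyond that, the argument is routine: verify absolute convergence for $|t|<1$ to legitimize the interchange of summations, confirm $(-n)_J=0$ for $J>n$, and apply (\ref{b4}) as stated.
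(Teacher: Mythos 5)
Your proposal follows essentially the same route as the paper's proof: expand via Definition~\ref{def:1}, use the vanishing of $(-n)_J$ for $J>n$ to restrict the inner sum, shift the index with identity (\ref{b4}), and let the $n$-sum collapse to $e^t$ while the $m$-sums factor into the ${}_0F_1$ and $\Phi$ products. If anything, you supply slightly more detail than the paper does (the explicit simplification $(-(n+J))_J=(-1)^J(n+J)!/n!$ and its cancellation), but the argument is the same.
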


\begin{proof}
Let $S$\ denote the first member of assertion (\ref{b5}). Then, 
\begin{align*}
S& =\sum\limits_{n=0}^{\infty }\ ^{\left( k\right) }E\ ^{\left( r\right)
}\left( {\small -n,\beta }_{k+1}{\small ,...,\beta }_{r}{\small ;\gamma }_{1}%
{\small ,...,\gamma }_{r}{\small ;x}_{1}{\small ,...,x}_{r}\right) \frac{%
t^{n}}{n!} \\
& =\sum\limits_{n=0}^{\infty }\sum\limits_{m_{1},...,m_{r}=0}^{\rho
(m_{1}+...+m_{k})+m_{k+1}+...+m_{r}\leq n}(-n)_{\rho
(m_{1}+...+m_{k})+m_{k+1}+...+m_{r}} \\
& \ \ \ \ \ \ \ \ \ \ \times \frac{(\beta _{k+1})_{m_{k+1}}...(\beta
_{r})_{m_{r}}}{(\gamma _{1})_{m_{1}}...(\gamma _{r})_{m_{r}}}\frac{\left(
x_{1}\right) ^{m_{1}}}{m_{1}!}...\frac{\left( x_{r}\right) ^{m_{r}}}{m_{r}!}%
\frac{t^{n}}{n!}
\end{align*}%
and use (\ref{b4}), 
\begin{align*}
S& =\sum\limits_{m_{1},...,m_{r}=0}^{\infty }\frac{(\beta
_{k+1})_{m_{k+1}}...(\beta _{r})_{m_{r}}}{(\gamma _{1})_{m_{1}}...(\gamma
_{r})_{m_{r}}} \\
& \times \frac{(x_{1}\left( -t\right) ^{\rho })^{m_{1}}...(x_{k}\left(
-t\right) ^{\rho })^{m_{k}}(-x_{k+1}t)^{m_{k+1}}...(-x_{r}t)^{m_{r}}}{\left(
m_{1}\right) !...\left( m_{k}\right) !\left( m_{k+1}\right) !...\left(
m_{r}\right) !}\sum\limits_{n=0}^{\infty }\frac{t^{n}}{n!} \\
& =e^{t}\sum\limits_{m_{1},...,m_{r}=0}^{\infty }\frac{(\beta
_{k+1})_{m_{k+1}}...(\beta _{r})_{m_{r}}}{(\gamma _{1})_{m_{1}}...(\gamma
_{r})_{m_{r}}}\frac{(x_{1}\left( -t\right) ^{\rho })^{m_{1}}}{\left(
m_{1}\right) !}...\frac{(x_{k}\left( -t\right) ^{\rho })^{m_{k}}}{\left(
m_{k}\right) !}\frac{(-x_{k+1}t)^{m_{k+1}}}{\left( m_{k+1}\right) !}...\frac{%
(-x_{r}t)^{m_{r}}}{\left( m_{r}\right) !} \\
& =e^{t}{}_{0}F_{1}\left( -,\gamma _{1};x_{1}\left( -t\right) ^{\rho
}\right) ...{}_{0}F_{1}\left( -,\gamma _{k};x_{k}\left( -t\right) ^{\rho
}\right) \Phi \left( \beta _{k+1},\gamma _{k+1};-x_{k+1}t\right) ...\Phi
\left( \beta _{r},\gamma _{r};-x_{r}t\right) ,
\end{align*}%
which completes the proof.
\end{proof}

\begin{theorem}
\label{thm:4} We have the following generating function for the
multivariable hypergeometric functions $~^{\left( k\right) }E\ ^{\left(
r\right) }$ defined by (\ref{b1}): 
\begin{align}
\sum\limits_{n=0}^{\infty }& \frac{(\lambda )_{n}}{n!}~^{\left( k\right)
}E^{\left( r\right) }\left( {\small \alpha ,\beta }_{k+1}{\small ,...,\beta }%
_{r}{\small ;1-\lambda -n,\gamma }_{2}{\small ,...,\gamma }_{r}{\small ;x}%
_{1}{\small ,...,x}_{r}\right) t^{n}  \label{b6} \\
& =(1-t)^{-\lambda }\text{ }^{\left( k\right) }E\ ^{\left( r\right) }\left( 
{\small \alpha ,\beta }_{k+1}{\small ,...,\beta }_{r}{\small ;1-\lambda
,\gamma }_{2}{\small ,...,\gamma }_{r}{\small ;x}_{1}\left( 1-t\right) 
{\small ,x}_{2},{\small ...,x}_{r}\right)   \notag
\end{align}%
where $\lambda \in \mathbb{C},\ \ \left\vert t\right\vert <1$.
\end{theorem}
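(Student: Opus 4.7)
The plan is to mimic the proofs of Theorems \ref{thm:1} and \ref{thm:2}: expand the left-hand side by the series definition (\ref{b1}), swap the order of summation so that the $n$-sum is performed last, and then collapse that inner series by the binomial theorem $\sum_n (a)_n t^n/n! = (1-t)^{-a}$. The novelty here compared with Theorem \ref{thm:1} is that the shifted parameter sits in the denominator, as $(1-\lambda-n)_{m_1}$, so the main technical step is a Pochhammer identity that rewrites $(\lambda)_n/(1-\lambda-n)_{m_1}$ in a form whose $n$-dependence is again a single Pochhammer symbol in $n$.

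More concretely, after interchanging the finite-depth $\vec m$ and the $n$ summations, each $\vec m$-term is multiplied by the scalar
\[
A(m_1)\;=\;\sum_{n=0}^{\infty}\frac{(\lambda)_n}{(1-\lambda-n)_{m_1}}\,\frac{t^n}{n!}.
\]
I would first establish the identity
\[
\frac{(\lambda)_n}{(1-\lambda-n)_{m_1}}\;=\;\frac{(\lambda-m_1)_n}{(1-\lambda)_{m_1}},
\]
which follows by splitting $(1-\lambda-n)_{n+m_1}=(1-\lambda-n)_{m_1}\,(1-\lambda+m_1-n)_n$ and simultaneously as $(1-\lambda-n)_n(1-\lambda)_{m_1}$, then applying $(1-\lambda-n)_n=(-1)^n(\lambda)_n$ together with $(b-n)_n=(-1)^n(1-b)_n$ for $b=1-\lambda+m_1$. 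This identity is the one step I expect to require the most care, because the arithmetic is easy to get off by a sign or a shift.

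Once that identity is in hand, the inner series becomes
\[
A(m_1)\;=\;\frac{1}{(1-\lambda)_{m_1}}\sum_{n=0}^{\infty}\frac{(\lambda-m_1)_n\, t^n}{n!}\;=\;\frac{(1-t)^{m_1-\lambda}}{(1-\lambda)_{m_1}},
\]
valid for $|t|<1$. The factor $(1-t)^{-\lambda}$ pulls out in front, while the $(1-t)^{m_1}$ combines with $x_1^{m_1}$ to give $(x_1(1-t))^{m_1}$, and the denominator $(1-\lambda)_{m_1}$ is exactly the Pochhammer symbol that replaces $(1-\lambda-n)_{m_1}$.

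Reassembling the remaining $\vec m$-series, I recognize it as
\[
{}^{(k)}E^{(r)}\!\bigl(\alpha,\beta_{k+1},\ldots,\beta_r;1-\lambda,\gamma_2,\ldots,\gamma_r;x_1(1-t),x_2,\ldots,x_r\bigr),
\]
which proves the claimed identity. The convergence hypothesis in Definition \ref{def:1} is preserved provided $|t|$ is sufficiently small, since scaling $x_1$ by $(1-t)$ only perturbs the relevant convergence expression by a bounded factor; a brief remark on this suffices, and the rest is formal manipulation.
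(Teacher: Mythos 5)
Your proposal is correct and follows essentially the same route as the paper's proof: expand by the series definition, interchange the summations, rewrite $(\lambda)_n/(1-\lambda-n)_{m_1}$ as $(\lambda-m_1)_n/(1-\lambda)_{m_1}$, sum the inner $n$-series to $(1-t)^{m_1-\lambda}$, and absorb $(1-t)^{m_1}$ into $x_1^{m_1}$. The only difference is that you justify the key Pochhammer identity explicitly (and correctly), whereas the paper applies it without comment.
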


\begin{proof}
Let $T$\ denote the first member of assertion (\ref{b6}). Then, 
\begin{align*}
T& =\sum\limits_{n=0}^{\infty }\sum\limits_{m_{1},...,m_{r}=0}^{\infty
}(\lambda )_{n}(\alpha )_{\rho (m_{1}+...+m_{k})+m_{k+1}+...+m_{r}} \\
& \ \ \ \ \ \ \ \ \times \frac{(\beta _{k+1})_{m_{k+1}}...(\beta
_{r})_{m_{r}}}{\left( 1-\lambda -n\right) _{m_{1}}(\gamma
_{2})_{m_{2}}...(\gamma _{r})_{m_{r}}}\frac{\left( x_{1}\right) ^{m_{1}}}{%
m_{1}!}...\frac{\left( x_{r}\right) ^{m_{r}}}{m_{r}!}\frac{t^{n}}{n!} \\
& =\sum\limits_{m_{1},...,m_{r}=0}^{\infty }(\alpha )_{\rho
(m_{1}+...+m_{k})+m_{k+1}+...+m_{r}}\frac{(\beta _{k+1})_{m_{k+1}}...(\beta
_{r})_{m_{r}}}{\left( 1-\lambda \right) _{m_{1}}(\gamma
_{2})_{m_{2}}...(\gamma _{r})_{m_{r}}} \\
& \times \frac{\left( x_{1}\right) ^{m_{1}}}{m_{1}!}...\frac{\left(
x_{r}\right) ^{m_{r}}}{m_{r}!}\sum\limits_{n=0}^{\infty }\frac{\left(
\lambda -m_{1}\right) _{n}}{n!}t^{n} \\
& =\left( 1-t\right) ^{-\lambda }\sum\limits_{m_{1},...,m_{r}=0}^{\infty
}(\alpha )_{\rho (m_{1}+...+m_{k})+m_{k+1}+...+m_{r}}\frac{(\beta
_{k+1})_{m_{k+1}}...(\beta _{r})_{m_{r}}}{\left( 1-\lambda \right)
_{m_{1}}(\gamma _{2})_{m_{2}}...(\gamma _{r})_{m_{r}}} \\
& \times \frac{\left( x_{1}\left( 1-t\right) \right) ^{m_{1}}}{m_{1}!}...%
\frac{\left( x_{r}\right) ^{m_{r}}}{m_{r}!} \\
& =(1-t)^{-\lambda }\text{ }^{\left( k\right) }E\ ^{\left( r\right) }\left( 
{\small \alpha ,\beta }_{k+1}{\small ,...,\beta }_{r}{\small ;1-\lambda
,\gamma }_{2}{\small ,...,\gamma }_{r}{\small ;x}_{1}\left( 1-t\right) 
{\small ,x}_{2},{\small ...,x}_{r}\right) 
\end{align*}%
which completes the proof.
\end{proof}

In the next theorem, let $\Psi _{m}$\ denote the following special functions 
$^{\left( k\right) }E\ ^{\left( r\right) }.$ 
\begin{equation}
\Psi _{m}=\ ^{\left( k\right) }E\ ^{\left( r\right) }\left( {\small %
\alpha,\beta }_{k+1}{\small ,...,\beta }_{r}{\small ;1-\lambda -m,\gamma }%
_{2}{\small ,...,\gamma }_{r}{\small ;x}_{1}{\small ,...,x}_{r}\right) .
\label{b7}
\end{equation}

\begin{theorem}
\label{thm:5} The following generating function for the multivariable
hypergeometric functions $\Psi _{m}$\ holds true: 
\begin{align}
\sum \limits_{n=0}^{\infty }&\binom{\lambda +m+n-1}{n}~\Psi
_{m+n}\left(x_{1},...,x_{r}\right) t^{n} =(1-t)^{-\lambda -m}\ \Psi
_{m}\left(x_{1}\left( 1-t\right) ,x_{2},...,x_{r}\right) .  \label{b8}
\end{align}
\end{theorem}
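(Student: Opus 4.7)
The plan is to recognize Theorem~\ref{thm:5} as nothing more than a reindexing of Theorem~\ref{thm:4}. First I would rewrite the binomial coefficient in Pochhammer form via the standard identity
\[
\binom{\lambda+m+n-1}{n}=\frac{(\lambda+m)_{n}}{n!},
\]
so that the left-hand side of \eqref{b8} becomes
\[
\sum_{n=0}^{\infty}\frac{(\lambda+m)_{n}}{n!}\,{}^{(k)}E^{(r)}\!\left(\alpha,\beta_{k+1},\ldots,\beta_{r};1-\lambda-m-n,\gamma_{2},\ldots,\gamma_{r};x_{1},\ldots,x_{r}\right)t^{n}.
\]

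Next I would invoke Theorem~\ref{thm:1}~\ldots actually Theorem~\ref{thm:4} with the parameter substitution $\lambda\mapsto\lambda+m$ (which is legitimate because Theorem~\ref{thm:4} is stated for arbitrary $\lambda\in\mathbb{C}$). After this substitution the first argument $1-\lambda-n$ in Theorem~\ref{thm:4} becomes $1-\lambda-m-n$, exactly matching the $\Psi_{m+n}$ of \eqref{b7}; the right-hand side becomes
\[
(1-t)^{-\lambda-m}\,{}^{(k)}E^{(r)}\!\left(\alpha,\beta_{k+1},\ldots,\beta_{r};1-\lambda-m,\gamma_{2},\ldots,\gamma_{r};x_{1}(1-t),x_{2},\ldots,x_{r}\right),
\]
which by the definition \eqref{b7} is precisely $(1-t)^{-\lambda-m}\Psi_{m}(x_{1}(1-t),x_{2},\ldots,x_{r})$. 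Comparing the two sides yields \eqref{b8} immediately.

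There is essentially no obstacle here: the only thing to verify carefully is that the substitution $\lambda\mapsto\lambda+m$ does not violate the convergence condition $|t|<1$, which it does not since this bound is independent of $\lambda$. Thus the entire proof reduces to the Pochhammer identity above and a one-line citation of Theorem~\ref{thm:4}. If a self-contained proof is preferred instead, I would expand $\Psi_{m+n}$ as the multiple series in \eqref{b1}, swap the order of summation, and apply the binomial identity
\[
\sum_{n=0}^{\infty}\frac{(\lambda+m)_{n+m_{1}}}{n!}\,t^{n}=(\lambda+m)_{m_{1}}(1-t)^{-\lambda-m-m_{1}}
\]
to collapse the inner sum over $n$, exactly mirroring the calculation in the proof of Theorem~\ref{thm:4}; the factor $(1-t)^{-m_{1}}$ produced by this step is what converts $x_{1}$ to $x_{1}(1-t)$ inside the resulting $\Psi_{m}$ on the right.
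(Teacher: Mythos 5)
Your proof is correct and follows essentially the same route as the paper: the paper's own argument also expands $\Psi_{m+n}$ via \eqref{b7} and then invokes Theorem~\ref{thm:4} (implicitly with $\lambda$ replaced by $\lambda+m$) to conclude. Your explicit use of the identity $\binom{\lambda+m+n-1}{n}=(\lambda+m)_{n}/n!$ only makes transparent a step the paper leaves tacit.
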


\begin{proof}
Let $T$\ denote the first member of assertion (\ref{b8}). Then, 
\begin{align*}
T &=\sum \limits_{n=0}^{\infty }\binom{\lambda +m+n-1}{n}~\Psi_{m+n}\left(
x_{1},...,x_{r}\right) t^{n} \\
&=\sum \limits_{n=0}^{\infty }\binom{\lambda +m+n-1}{n}\ ^{\left(
k\right)}E\ ^{\left( r\right) }\left( {\small \alpha ,\beta }_{k+1}{\small %
,...,\beta }_{r}{\small ;1-\lambda -m-n,\gamma }_{2}{\small ,...,\gamma }_{r}%
{\small ;x}_{1}{\small ,...,x}_{r}\right) t^{n} \\
\end{align*}
from Theorem ~\ref{thm:4}, 
\begin{align*}
T &=(1-t)^{-\lambda -m}\text{ }^{\left( k\right) }E\ ^{\left(
r\right)}\left( {\small \alpha ,\beta }_{k+1}{\small ,...,\beta }_{r}{\small %
;1-\lambda -m,\gamma }_{2}{\small ,...,\gamma }_{r}{\small ;x}%
_{1}\left(1-t\right) {\small ,x}_{2},{\small ...,x}_{r}\right) \\
&=(1-t)^{-\lambda -m}\ \Psi _{m}\left( x_{1}\left(
1-t\right),x_{2},...,x_{r}\right) ,
\end{align*}
which completes the proof.
\end{proof}


Similar to Corollary 1, we also get the following corollary for Theorems 2,
3, 4 and 5.

\begin{corollary}
\label{cor:2}\textbf{\ }If we take $\rho =2$ in Theorems 2, 3, 4 and 5, then
we have new generating relations for the multivariable fourth kind Horn
functions. \newline If we take $\rho =2,~k=1~and~r=2$ in
Theorems 2, 3, 4 and 5, then we have new generating relations for the fourth
kind Horn functions. \newline  If we take $\rho =2,~k=0~$ in Theorems
2, 3, 4 and 5, then we have new generating relations for the Lauricella
functions.\newline If we take\ $\rho =2,~k=0~and~r=2$ in Theorems 2, 3, 4 and 5, then
we have new generating relations for the second kind Appell functions.
\end{corollary}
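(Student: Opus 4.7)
The plan is to observe that Corollary \ref{cor:2} is a direct consequence of the four specializations already established in the preceding discussion, combined with the four theorems it cites. First I would recall the identification proved after Theorem \ref{thm:1}: setting $\rho = 2$ in the defining series (\ref{b1}) turns the factor $(\alpha)_{\rho(m_1+\cdots+m_k)+m_{k+1}+\cdots+m_r}$ into $(\alpha)_{2(m_1+\cdots+m_k)+m_{k+1}+\cdots+m_r}$, which is exactly the Pochhammer factor appearing in the multivariable fourth kind Horn function $^{(k)}H_4^{(r)}$. Since all other factors in (\ref{b1}) coincide verbatim with those in the definition of $^{(k)}H_4^{(r)}$, we have the operator identity $^{(k)}E^{(r)}|_{\rho=2} \equiv {}^{(k)}H_4^{(r)}$.

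Given this identity, the first claim of Corollary \ref{cor:2} is obtained by substituting $\rho = 2$ into the right- and left-hand sides of (\ref{b3}), (\ref{b5}), (\ref{b6}) and (\ref{b8}), replacing each occurrence of $^{(k)}E^{(r)}$ by $^{(k)}H_4^{(r)}$, and reading the exponents $(1-t)^{-\rho}$ appearing in the transformed arguments as $(1-t)^{-2}$. This yields four generating relations for $^{(k)}H_4^{(r)}$ analogous to Corollary \ref{cor:1}. Next I would take $k=1$, $r=2$: under these choices the multivariable Horn function $^{(1)}H_4^{(2)}$ reduces to the classical fourth kind Horn function $H_4$ of two variables, and the four specialized theorems collapse to generating relations for $H_4$ of the same shape as Remark \ref{rem:1}.

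For the remaining two statements I would set $k=0$ in (\ref{b1}); the sum over the first $k$ indices is then empty, so the $\rho$-term in the Pochhammer factor vanishes and one obtains $(\alpha)_{m_1+\cdots+m_r}$, which is precisely the index structure of the Lauricella function $F_A^{(r)}$, and the $\beta$-parameters now run over all $r$ indices. Substituting $\rho = 2$ together with $k=0$ into Theorems \ref{thm:2}--\ref{thm:5} therefore produces four generating relations for $F_A^{(r)}$ in the style of Remark \ref{rem:2}. Finally, taking $r = 2$ in those relations specialises $F_A^{(2)}$ to the second Appell function $F_2$, yielding the four generating relations announced in the last item of the corollary, in analogy with Remark \ref{rem:3}.

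The argument is entirely mechanical: there is no genuine obstacle, only bookkeeping. The one point that merits slight care is the substitution in Theorem \ref{thm:2}, where the arguments of $^{(k)}E^{(r)}$ on the right-hand side involve $(-t)^{\rho}/(1-t)^{\rho}$; after setting $\rho = 2$ this becomes $t^2/(1-t)^2$, and the minus signs must be tracked correctly when specialising further to $k=0$ so that they disappear in the Lauricella and Appell specializations. All other substitutions are straightforward re-labellings.
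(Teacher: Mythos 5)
Your proposal is correct and follows the same route the paper (implicitly) takes: the paper offers no separate proof of Corollary \ref{cor:2}, merely noting that it follows ``similar to Corollary \ref{cor:1}'', i.e.\ via the identification $^{(k)}E^{(r)}|_{\rho=2}={}^{(k)}H_4^{(r)}$ and the subsequent specializations of $k$ and $r$ exactly as you describe. Your added remark about tracking the factors $(-t)^{\rho}=t^{2}$ in Theorem \ref{thm:2} is a sensible piece of bookkeeping that the paper leaves unstated.
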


\section{Multilinear and Multilateral Generating Functions}

In this section, we derive several families of multilinear and multilateral
generating function for multivariable hypergeometric functions defined by (%
\ref{b1}) by using the similar method considered in \cite{E-S, R-E}.

\begin{theorem}
\label{thm:6} Corresponding to an identically non-vanishing function $\Omega
_{\mu}(y_{1},...,y_{s}\,)$\ of $s$\ complex variables $y_{1},...,y_{s} $\ $%
(s\in N) $\ and of complex order $\mu $, let 
\begin{equation*}
\Lambda _{\mu ,\psi }(y_{1},...,y_{s};\zeta ):=\sum
\limits_{k=0}^{\infty}a_{k}\Omega _{\mu +\psi k}(y_{1},...,y_{s})\zeta ^{k}
\end{equation*}
where $a_{k}\neq 0\,,\, \, \mu ,\psi \in C$\ and 
\begin{align*}
\Theta _{n,p}^{\mu ,\psi }&\left( x_{1},x_{2};y_{1},...,y_{s};\xi \right) \\
&:=\sum \limits_{k=0}^{\left[ n/p\right] }a_{k}\left( \lambda
\right)_{n-pk}\ \ ^{\left( k\right) }E\ ^{\left( r\right) }\left( {\small %
\lambda+n-pk,\beta }_{k+1}{\small ,...,\beta }_{r}{\small ;\gamma }_{1}%
{\small ,...,\gamma }_{r}{\small ;x}_{1}{\small ,...,x}_{r}\right) \Omega
_{\mu +\psi k}(y_{1},...,y_{s})\frac{\xi ^{k}}{(n-pk)!}.
\end{align*}
Then, for $p\in N,$ we have 
\begin{align}
\sum \limits_{n=0}^{\infty }&\Theta _{n,p}^{\mu ,\psi
}\left(x_{1},x_{2};y_{1},...,y_{s};\frac{\eta }{t^{p}}\right) t^{n}=\Lambda
_{\mu,\psi }(y_{1},...,y_{s};\eta )(1-t)^{-\lambda }  \notag \\
&\times \text{ }^{\left( k\right) }E\ ^{\left( r\right) }\left( {\small %
\lambda ,\beta }_{k+1}{\small ,...,\beta }_{r}{\small ;\gamma }_{1}{\small %
,...,\gamma }_{r}{\small ;\frac{x_{1}}{\left( 1-t\right) ^{\rho }},...,\frac{%
x_{k}}{\left( 1-t\right) ^{\rho }},\frac{x_{k+1}}{\left( 1-t\right) },...,}%
\frac{x_{r}}{\left( 1-t\right) }\right)  \label{c1}
\end{align}
provided that each member of (\ref{c1}) exists.
\end{theorem}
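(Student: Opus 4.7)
The plan is to substitute $\xi=\eta/t^{p}$ directly into the definition of $\Theta_{n,p}^{\mu,\psi}$, interchange the order of the resulting double summation via the identity (\ref{b4}), and then recognize the two decoupled pieces as $\Lambda_{\mu,\psi}$ on the one hand and the generating function of Theorem~\ref{thm:1} on the other.

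Writing $S$ for the left-hand side of (\ref{c1}) and using $t^{n}\xi^{k}=\eta^{k}t^{n-pk}$, the substitution immediately yields
\begin{equation*}
S=\sum_{n=0}^{\infty}\sum_{k=0}^{[n/p]}a_{k}(\lambda)_{n-pk}\,{}^{(k)}E^{(r)}(\lambda+n-pk,\beta_{k+1},\ldots;x_{1},\ldots,x_{r})\,\Omega_{\mu+\psi k}(y_{1},\ldots,y_{s})\,\frac{\eta^{k}t^{n-pk}}{(n-pk)!}.
\end{equation*}
Next I would apply the rearrangement (\ref{b4}) with $s=1$ and $m_{1}=p$; concretely this replaces the inner constraint $k\leq[n/p]$ by $k\geq 0$ and shifts $n\mapsto n+pk$, so that every occurrence of $n-pk$ becomes~$n$. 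The result is
\begin{equation*}
S=\sum_{n=0}^{\infty}\sum_{k=0}^{\infty}a_{k}(\lambda)_{n}\,{}^{(k)}E^{(r)}(\lambda+n,\beta_{k+1},\ldots;x_{1},\ldots,x_{r})\,\Omega_{\mu+\psi k}(y_{1},\ldots,y_{s})\,\frac{\eta^{k}t^{n}}{n!}.
\end{equation*}

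At this point the $k$-dependent factors separate from the $n$-dependent ones, so $S$ factors as
\begin{equation*}
S=\Bigl(\sum_{k=0}^{\infty}a_{k}\Omega_{\mu+\psi k}(y_{1},\ldots,y_{s})\eta^{k}\Bigr)\Bigl(\sum_{n=0}^{\infty}\frac{(\lambda)_{n}}{n!}\,{}^{(k)}E^{(r)}(\lambda+n,\beta_{k+1},\ldots;x_{1},\ldots,x_{r})\,t^{n}\Bigr).
\end{equation*}
The first bracket is $\Lambda_{\mu,\psi}(y_{1},\ldots,y_{s};\eta)$ by definition, while the second bracket is precisely the left-hand side of (\ref{b2}); invoking Theorem~\ref{thm:1} replaces it by $(1-t)^{-\lambda}\,{}^{(k)}E^{(r)}(\lambda,\beta_{k+1},\ldots;x_{1}/(1-t)^{\rho},\ldots,x_{r}/(1-t))$, which is exactly the right-hand side of~(\ref{c1}).

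The whole argument is essentially bookkeeping, and the only genuinely delicate point is the legitimacy of swapping the two summations through (\ref{b4}). This is absorbed by the blanket hypothesis \emph{``provided that each member of (\ref{c1}) exists''} in the statement: under that convergence assumption the underlying multiple series converges absolutely, so Fubini justifies the rearrangement without further comment.
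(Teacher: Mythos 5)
Your proof is correct and follows essentially the same route as the paper's: substitute $\xi=\eta/t^{p}$, shift $n\mapsto n+pk$ to remove the constraint $k\leq[n/p]$ (the paper phrases this as ``replacing $n$ by $n+pk$'' where you invoke (\ref{b4})), factor the double sum into $\Lambda_{\mu,\psi}$ times the series of Theorem~\ref{thm:1}, and conclude. Your added remark about absolute convergence and Fubini is a justification the paper leaves implicit.
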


\begin{proof}
For convenience, let $S$\ denote the first member of the assertion (\ref{c1}%
). Then, 
\begin{align*}
S &=\sum \limits_{n=0}^{\infty }\sum \limits_{k=0}^{[n/p]}\
a_{k}\left(\lambda \right) _{n-pk}\ ^{\left( k\right) }E\ ^{\left( r\right)
}\left({\small \lambda +n-pk,\beta }_{k+1}{\small ,...,\beta }_{r}{\small %
;\gamma }_{1}{\small ,...,\gamma }_{r}{\small ;x}_{1}{\small ,...,x}%
_{r}\right) \\
&\times \Omega _{\mu +\psi k}(y_{1},...,y_{s})\eta ^{k}\frac{t^{n-pk}}{%
(n-pk)!}.
\end{align*}
Replacing $n$\ by $n+pk,$\ we may write that 
\begin{align*}
S &=\sum \limits_{n=0}^{\infty }\sum \limits_{k=0}^{\infty
}a_{k}\left(\lambda \right) _{n}\ ^{\left( k\right) }E\ ^{\left( r\right)
}\left( {\small \lambda +n,\beta }_{k+1}{\small ,...,\beta }_{r}{\small %
;\gamma }_{1}{\small ,...,\gamma }_{r}{\small ;x}_{1}{\small ,...,x}%
_{r}\right) \\
&\times \Omega _{\mu +\psi k}(y_{1},...,y_{s})\eta ^{k}\frac{t^{n}}{n!} \\
&=\sum \limits_{n=0}^{\infty }\left( \lambda \right) _{n}\ ^{\left(k\right)
}E\ ^{\left( r\right) }\left( {\small \lambda +n,\beta }_{k+1}{\small %
,...,\beta }_{r}{\small ;\gamma }_{1}{\small ,...,\gamma }_{r}{\small ;x}_{1}%
{\small ,...,x}_{r}\right) \frac{t^{n}}{n!} \\
&\times \sum \limits_{k=0}^{\infty }a_{k}\Omega _{\mu +\psi
k}(y_{1},...,y_{r})\eta ^{k} \\
&=\text{ }^{\left( k\right) }E\ ^{\left( r\right) }\left( {\small %
\lambda,\beta }_{k+1}{\small ,...,\beta }_{r}{\small ;\gamma }_{1}{\small %
,...,\gamma }_{r}{\small ;\frac{x_{1}}{\left( 1-t\right) ^{\rho }},...,\frac{%
x_{k}}{\left( 1-t\right) ^{\rho }},\frac{x_{k+1}}{\left( 1-t\right) },...,}%
\frac{x_{r}}{\left( 1-t\right) }\right) \\
&\times (1-t)^{-\lambda }\ \Lambda _{\mu ,\psi }(y_{1},...,y_{s};\eta ),
\end{align*}
which completes the proof.
\end{proof}

In a similar manner, we also get the Theorem \ref{thm:7} and \ref{thm:8}
immediately.

\begin{theorem}
\label{thm:7} Corresponding to an identically non-vanishing function $\Omega
_{\mu}(y_{1},...,y_{s}\,)$\ of $s$\ complex variables $y_{1},...,y_{s} $\ $%
(s\in N) $\ and of complex order $\mu $, let 
\begin{equation*}
\Lambda _{\mu ,\psi }(y_{1},...,y_{s};\zeta ):=\sum
\limits_{k=0}^{\infty}a_{k}\Omega _{\mu +\psi k}(y_{1},...,y_{s})\zeta ^{k}
\end{equation*}
where $a_{k}\neq 0\,,\, \, \mu ,\psi \in C$ and 
\begin{align*}
\Theta _{n,p}^{\mu ,\psi }&\left( x_{1},x_{2};y_{1},...,y_{s};\xi \right) \\
&:=\sum \limits_{k=0}^{[n/p]}a_{k}\left( \lambda \right) _{n-pk}\
~^{\left(k\right) }E\ ^{\left( r\right) }\left( {\small -n-pk,\beta }_{k+1}%
{\small ,...,\beta }_{r}{\small ;\gamma }_{1}{\small ,...,\gamma }_{r}%
{\small ;x}_{1}{\small ,...,x}_{r}\right)\Omega _{\mu +\psi
k}(y_{1},...,y_{s})\frac{\xi ^{k}}{(n-pk)!}.
\end{align*}
Then, for \ $p\in N,$ we have 
\begin{align}
\sum \limits_{n=0}^{\infty }&\Theta _{n,p}^{\mu ,\psi
}\left(x_{1},x_{2};y_{1},...,y_{s};\frac{\eta }{t^{p}}\right) t^{n}=\Lambda
_{\mu,\psi }(y_{1},...,y_{s};\eta )(1-t)^{-\lambda }  \label{c2} \\
&\times \ ^{\left( k\right) }E\ ^{\left( r\right) }\left( {\small %
\lambda,\beta }_{k+1}{\small ,...,\beta }_{r}{\small ;\gamma }_{1}{\small %
,...,\gamma }_{r}{\small ;\frac{x_{1}\left( -t\right) ^{\rho }}{%
\left(1-t\right) ^{\rho }},...,\frac{x_{k}\left( -t\right) ^{\rho }}{%
\left(1-t\right) ^{\rho }},\frac{-x_{k+1}t}{\left( 1-t\right) },...,}\frac{%
-x_{r}t}{\left( 1-t\right) }\right) ,  \notag
\end{align}
provided that each member of (\ref{c2}) exists.
\end{theorem}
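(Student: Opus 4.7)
The plan is to mirror the proof of Theorem~\ref{thm:6}, swapping out Theorem~\ref{thm:1} for Theorem~\ref{thm:2} at the final step. First I would denote the left-hand side of (\ref{c2}) by $S$ and substitute the definition of $\Theta_{n,p}^{\mu,\psi}$ evaluated at $\xi=\eta/t^{p}$, so that the factor $\xi^{k}t^{n}$ becomes $\eta^{k}t^{n-pk}$. This yields a double sum over $n\geq 0$ and $0\leq k\leq [n/p]$.

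Next I would interchange the order of summation. The index set $\{(n,k):n\geq 0,\,0\leq k\leq [n/p]\}$ coincides with $\{(n,k):k\geq 0,\,n\geq pk\}$, and after the substitution $n\mapsto n+pk$ the two indices range independently over the non-negative integers. Every occurrence of $n-pk$ in the summand, namely in $(\lambda)_{n-pk}$, in the first argument of $^{(k)}E^{(r)}$, in $t^{n-pk}$, and in $(n-pk)!$, simultaneously becomes $n$. Consequently, the summand factors as a product of a $k$-part and an $n$-part.

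The $k$-part is $\sum_{k=0}^{\infty}a_{k}\Omega_{\mu+\psi k}(y_{1},\ldots,y_{s})\eta^{k}$, which is $\Lambda_{\mu,\psi}(y_{1},\ldots,y_{s};\eta)$ by definition. The $n$-part is exactly $\sum_{n=0}^{\infty}\frac{(\lambda)_{n}}{n!}\,{}^{(k)}E^{(r)}\bigl(-n,\beta_{k+1},\ldots,\beta_{r};\gamma_{1},\ldots,\gamma_{r};x_{1},\ldots,x_{r}\bigr)\,t^{n}$, i.e.\ the left-hand side of (\ref{b3}). Applying Theorem~\ref{thm:2} collapses this to $(1-t)^{-\lambda}\,{}^{(k)}E^{(r)}\bigl(\lambda,\beta_{k+1},\ldots,\beta_{r};\gamma_{1},\ldots,\gamma_{r};\frac{x_{1}(-t)^{\rho}}{(1-t)^{\rho}},\ldots,\frac{x_{k}(-t)^{\rho}}{(1-t)^{\rho}},\frac{-x_{k+1}t}{(1-t)},\ldots,\frac{-x_{r}t}{(1-t)}\bigr)$, and multiplying the two factors gives the right-hand side of (\ref{c2}).

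There is no substantive analytic obstacle: convergence is guaranteed by the blanket hypothesis that each member of (\ref{c2}) exists, so the interchange of summation requires no independent justification. The only real care required is the index bookkeeping at the shift $n\mapsto n+pk$ and tracking how the first argument of $^{(k)}E^{(r)}$ transforms; this step is word-for-word parallel to the corresponding passage in the proof of Theorem~\ref{thm:6}.
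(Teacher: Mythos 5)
Your proof is correct and is exactly the argument the paper intends: the paper omits the proof of Theorem~\ref{thm:7} entirely, stating only that it follows ``in a similar manner'' to Theorem~\ref{thm:6}, and your adaptation---the same substitution $\xi=\eta/t^{p}$, interchange of summation, and index shift $n\mapsto n+pk$, with Theorem~\ref{thm:2} invoked in place of Theorem~\ref{thm:1} to collapse the $n$-sum---is precisely that argument. Note that you have implicitly (and correctly) read the first argument ${-n-pk}$ in the definition of $\Theta_{n,p}^{\mu,\psi}$ as $-(n-pk)$, which is the only reading under which the shifted $n$-sum becomes the left-hand side of (\ref{b3}) and the stated conclusion holds.
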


\begin{theorem}
\label{thm:8} Corresponding to an identically non-vanishing function $\Omega
_{\mu }(y_{1},...,y_{s}\,)$\ of $s$\ complex variables $y_{1},...,y_{s}$ $%
(s\in N)$ and of complex order $\mu $, let 
\begin{equation*}
\Lambda _{\mu ,\psi }(y_{1},...,y_{s};\zeta ):=\sum\limits_{k=0}^{\infty
}a_{k}\Omega _{\mu +\psi k}(y_{1},...,y_{s})\zeta ^{k}
\end{equation*}%
where $a_{k}\neq 0\,,\,\,\mu ,\psi \in C$ and 
\begin{align*}
\Theta _{n,p}^{\mu ,\psi }& \left( x_{1},x_{2};y_{1},...,y_{s};\xi \right) \\
& :=\sum\limits_{k=0}^{[n/p]}a_{k}\ ^{\left( k\right) }E\ ^{\left( r\right)
}\left( {\small -n-pk,\beta }_{k+1}{\small ,...,\beta }_{r}{\small ;\gamma }%
_{1}{\small ,...,\gamma }_{r}{\small ;x}_{1}{\small ,...,x}_{r}\right) \\
& \times \Omega _{\mu +\psi k}(y_{1},...,y_{s})\frac{\xi ^{k}}{(n-pk)!}.
\end{align*}%
Then, for \ $p\in N,$ we have 
\begin{align}
\sum\limits_{n=0}^{\infty }& \Theta _{n,p}^{\mu ,\psi }\left(
x_{1},x_{2};y_{1},...,y_{s};\frac{\eta }{t^{p}}\right) t^{n}=\Lambda _{\mu
,\psi }(y_{1},...,y_{s};\eta )  \notag \\
& \times e^{t}{}_{0}F_{1}\left( -,\gamma _{1};x_{1}\left( -t\right) ^{\rho
}\right) ...{}_{0}F_{1}\left( -,\gamma _{k};x_{k}\left( -t\right) ^{\rho
}\right)  \notag \\
& \times \Phi \left( \beta _{k+1},\gamma _{k+1};-x_{k+1}t\right) ...\Phi
\left( \beta _{r},\gamma _{r};-x_{r}t\right) ,  \label{c3}
\end{align}%
provided that each member of (\ref{c3}) exists.
\end{theorem}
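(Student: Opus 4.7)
The strategy is to reduce the left-hand side of (\ref{c3}) to an iterated sum and recognize the inner sum as the left-hand side of the generating relation (\ref{b5}) from Theorem~\ref{thm:3}. This follows exactly the template used for Theorems~\ref{thm:6} and~\ref{thm:7}, where Theorems~\ref{thm:1} and~\ref{thm:2} play the analogous role; the only change is that the companion result being invoked is Theorem~\ref{thm:3}.

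To carry this out, I would first denote the left-hand side of (\ref{c3}) by $S$, insert the definition of $\Theta_{n,p}^{\mu,\psi}$ with $\xi = \eta/t^{p}$, and collect the resulting factor $t^{n}/t^{pk} = t^{n-pk}$. This expresses $S$ as a double sum over $n \geq 0$ and $0 \leq k \leq [n/p]$ whose summand is a product of an $\,^{(k)}E^{(r)}$ term, an $\Omega_{\mu + \psi k}$ term, and a monomial in $t$ and $\eta$.

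Next, I would remove the inner constraint by the standard reindexing $n \mapsto n + pk$ (the very device used in the proofs of Theorems~\ref{thm:6} and~\ref{thm:7}). The double sum then becomes unrestricted and factors cleanly as
\[
S = \left( \sum_{k=0}^{\infty} a_{k}\, \Omega_{\mu + \psi k}(y_{1}, \ldots, y_{s})\, \eta^{k} \right) \left( \sum_{n=0}^{\infty} \,^{(k)}E^{(r)}\left( -n, \beta_{k+1}, \ldots, \beta_{r}; \gamma_{1}, \ldots, \gamma_{r}; x_{1}, \ldots, x_{r} \right) \frac{t^{n}}{n!} \right).
\]
The first factor is $\Lambda_{\mu, \psi}(y_{1}, \ldots, y_{s}; \eta)$ by definition, and the second is exactly the left-hand side of (\ref{b5}); by Theorem~\ref{thm:3} it equals
\[
e^{t}\,{}_{0}F_{1}(-, \gamma_{1}; x_{1}(-t)^{\rho}) \cdots {}_{0}F_{1}(-, \gamma_{k}; x_{k}(-t)^{\rho})\, \Phi(\beta_{k+1}, \gamma_{k+1}; -x_{k+1}t) \cdots \Phi(\beta_{r}, \gamma_{r}; -x_{r}t).
\]
Multiplying these two factors recovers the right-hand side of (\ref{c3}).

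The only step requiring care is the interchange of summation underlying the reindexing, which is licensed by the standing hypothesis that each side of (\ref{c3}) exists, ensuring the absolute convergence needed to rearrange. No deeper identity is needed beyond Theorem~\ref{thm:3}, so no step here presents a genuine obstacle.
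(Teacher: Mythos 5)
Your proposal is correct and is precisely the argument the paper intends: the paper gives no separate proof of Theorem~\ref{thm:8}, stating only that it follows ``in a similar manner'' to Theorem~\ref{thm:6}, and your substitution of $\xi=\eta/t^{p}$, reindexing $n\mapsto n+pk$, factorization of the double sum, and appeal to Theorem~\ref{thm:3} for the inner sum is exactly that template. The only point worth noting is that the first parameter written as $-n-pk$ in the definition of $\Theta_{n,p}^{\mu,\psi}$ must be read as $-(n-pk)$ for the reindexing to produce the $-n$ needed in (\ref{b5}); you have tacitly adopted this (clearly intended) reading.
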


\begin{theorem}
\label{thm:9} Corresponding to an identically non-vanishing function $\Omega
_{\mu }(y_{1},...,y_{r}\,)$\ of $r$\ complex variables $y_{1},...,y_{r}$\ $%
(r\in N)$\ and of complex order $\mu $, let
\end{theorem}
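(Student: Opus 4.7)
The statement of Theorem \ref{thm:9} is cut off at ``let,'' but the pattern of Theorems \ref{thm:6}--\ref{thm:8} (which repackage Theorems \ref{thm:1}, \ref{thm:2}, \ref{thm:3} respectively into Srivastava--Manocha type multilateral families) strongly suggests that Theorem \ref{thm:9} is the analogous repackaging of Theorem \ref{thm:4}. I therefore anticipate the auxiliary series
\begin{equation*}
\Theta_{n,p}^{\mu,\psi}(x_1,\ldots,x_r;y_1,\ldots,y_r;\xi):=\sum_{k=0}^{[n/p]} a_k(\lambda)_{n-pk}\,^{(k)}E^{(r)}\bigl(\alpha,\beta_{k+1},\ldots,\beta_r;1-\lambda-n+pk,\gamma_2,\ldots,\gamma_r;x_1,\ldots,x_r\bigr)\,\Omega_{\mu+\psi k}(y_1,\ldots,y_r)\frac{\xi^k}{(n-pk)!},
\end{equation*}
with the expected identity
\begin{equation*}
\sum_{n=0}^{\infty}\Theta_{n,p}^{\mu,\psi}\!\left(x_1,\ldots,x_r;y_1,\ldots,y_r;\tfrac{\eta}{t^p}\right)t^n=\Lambda_{\mu,\psi}(y_1,\ldots,y_r;\eta)(1-t)^{-\lambda}\,^{(k)}E^{(r)}\bigl(\alpha,\beta_{k+1},\ldots,\beta_r;1-\lambda,\gamma_2,\ldots,\gamma_r;x_1(1-t),x_2,\ldots,x_r\bigr).
\end{equation*}

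The plan is to mirror the proof of Theorem \ref{thm:6} line by line. First I would let $S$ denote the left-hand side and insert the defining double sum for $\Theta_{n,p}^{\mu,\psi}$, collecting the $t$-powers as $t^{n-pk}\cdot t^{pk}\cdot t^{-pk} = t^{n-pk}$ after absorbing $\eta/t^p$ into $\xi$. Next I would perform the classical index shift $n\mapsto n+pk$ to decouple the $k$-sum from the $n$-sum; the truncated upper limit $k\le[n/p]$ becomes vacuous after the shift because $k\le[(n+pk)/p]$ holds automatically. What remains factors cleanly: the outer $k$-sum reassembles $\Lambda_{\mu,\psi}(y_1,\ldots,y_r;\eta)$ by definition, while the inner $n$-sum is precisely the left-hand side of Theorem \ref{thm:4} and is therefore equal to $(1-t)^{-\lambda}\,^{(k)}E^{(r)}(\alpha,\beta_{k+1},\ldots,\beta_r;1-\lambda,\gamma_2,\ldots,\gamma_r;x_1(1-t),x_2,\ldots,x_r)$.

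The main obstacle, as always with this type of rearrangement, is justifying the interchange of the double summation. This requires that the iterated series converge absolutely on a common region: $|t|<1$, the $x$-variables lying in the polydisc of convergence of $\,^{(k)}E^{(r)}$ with the appropriately scaled arguments, and the $y$-variables and $\eta$ lying in a region where $\Lambda_{\mu,\psi}$ converges absolutely. Because this is exactly the hypothesis ``provided that each member exists'' used in Theorems \ref{thm:6}--\ref{thm:8}, no new analytic work is needed and the argument is purely formal once that convergence is assumed. Everything else is algebraic bookkeeping, and the proof reduces to a single appeal to Theorem \ref{thm:4}.
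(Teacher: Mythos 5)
You correctly noticed that the theorem statement is truncated at ``let'' and tried to reconstruct the missing content by analogy with Theorems \ref{thm:6}--\ref{thm:8}, but the reconstruction is wrong: the paper's Theorem \ref{thm:9} is not a $\Theta _{n,p}^{\mu ,\psi }$-type companion of Theorem \ref{thm:4}. It is the bilateral companion of Theorem \ref{thm:5}, phrased in terms of the functions $\Psi _{m}$ of (\ref{b7}) and the binomial weights $\binom{\lambda +m+n-1}{n}$. The paper sets
\begin{equation*}
N_{n,m,q}^{p,\mu }(y_{1},...,y_{r};z):=\sum_{k=0}^{\left[ n/q\right] }a_{k}\Omega _{\mu +pk}(y_{1},...,y_{r})z^{k},\qquad \Lambda _{m,q}:=\sum_{n=0}^{\infty }a_{n}\Psi _{m+nq}(x_{1},...,x_{r})\Omega _{\mu +pn}(y_{1},...,y_{r})t^{n},
\end{equation*}
and proves
\begin{equation*}
\sum_{n=0}^{\infty }\binom{\lambda +m+n-1}{n}\Psi _{m+n}(x_{1},...,x_{r})N_{n,m,q}^{p,\mu }(y_{1},...,y_{r};z)t^{n}=(1-t)^{-\lambda -m}\Lambda _{m,q}\left( x_{1}(1-t),x_{2},...,x_{r};y_{1},...,y_{r};\frac{zt^{q}}{(1-t)^{q}}\right) .
\end{equation*}

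The manipulation you outline --- insert the truncated inner sum, interchange the order of summation, shift the index, apply the one-parameter generating relation termwise, and reassemble the outer series --- is indeed the paper's method, and your remark that the interchange is covered by the blanket hypothesis ``provided that each member exists'' also matches. But the step you would miss is exactly the one that gives the theorem its shape: after the shift $n\mapsto n+qk$ the inner sum is Theorem \ref{thm:5} with $m$ replaced by $m+qk$, so the prefactor it produces is $(1-t)^{-\lambda -m-qk}$ and depends on $k$; the leftover factor $(1-t)^{-qk}$ combines with $(zt^{q})^{k}$ to turn the last argument of $\Lambda _{m,q}$ into $zt^{q}/(1-t)^{q}$. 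In your proposed identity the prefactor $(1-t)^{-\lambda }$ is $k$-independent and $\Lambda _{\mu ,\psi }(\cdot ;\eta )$ factors out unchanged. Your identity is formally true by the same kind of argument and would be a legitimate further corollary of Theorem \ref{thm:4}, but it is not the statement the paper proves, so the proposal as written does not establish Theorem \ref{thm:9}.
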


\begin{equation*}
\Lambda _{m,q}(x_{1},...,x_{r};y_{1},...,y_{r};t) :=\sum
\limits_{n=0}^{\infty }a_{n}\ \Psi _{m+nq}\left(x_{1},...,x_{r}\right)
\Omega _{\mu +pn}(y_{1},...,y_{r})t^{n},
\end{equation*}
where $(a_{n}\neq 0\,,\, \, \mu \in C)$, $\Psi _{m}$ is defined by (\ref{b7}%
) and 
\begin{equation*}
N_{n,m,q}^{p,\mu }(y_{1},...,y_{r};z):=\sum_{k=0}^{\left[ n/q\right]%
}a_{k}\Omega _{\mu +pk}(y_{1},...,y_{r})z^{k}.
\end{equation*}
Then, for every nonnegative integer $m$, 
\begin{align}
\sum \limits_{n=0}^{\infty }&\binom{\lambda +m+n-1}{n}\Psi
_{m+n}\left(_{1},...,x_{r}\right) N_{n,m,q}^{p,\mu }(y_{1},...,y_{r};z)t^{n}
\label{c4} \\
&=(1-t)^{-\lambda -m}{}\Lambda _{m,q}\left( x_{1}\left(
1-t\right),x_{2},...,x_{r};y_{1},...,y_{r};\frac{zt^{q}}{\left( 1-t\right)
^{q}}\right)  \notag
\end{align}
provided that each member of (\ref{c4}) exists.

\begin{proof}
Let $T$\ denote the left-hand side of equality (\ref{c4}). Then we have 
\begin{align*}
T &=\sum \limits_{n=0}^{\infty }\binom{\lambda +m+n-1}{n}\ \Psi_{m+n}\left(
x_{1},...,x_{r}\right) \sum_{k=0}^{\left[ n/q\right]}a_{k}\Omega _{\mu
+pk}(y_{1},...,y_{r})z^{k}t^{n} \\
&=\sum \limits_{k=0}^{\infty }\left( \sum_{n=0}^{\infty }\binom{%
\lambda+m+n+qk-1}{n}\ \Psi _{m+n+qk}\left( x_{1},...,x_{r}\right)
t^{n}\right) \\
&\ \ \ \ \ \ \ \ \ \ \ \ \ \ \ \ \ \times a_{k}\Omega
_{\mu+pk}(y_{1},...,y_{r})\left( zt^{q}\right) ^{k} \\
&=\sum \limits_{k=0}^{\infty }\left( 1-t\right) ^{-\lambda
-m-qk}\Psi_{m+qk}\left( x_{1}\left( 1-t\right) ,x_{2},...,x_{r}\right)
a_{k}\Omega_{\mu +pk}(y_{1},...,y_{r})\left( zt^{q}\right) ^{k} \\
&=(1-t)^{-\lambda -m}{}\sum \limits_{k=0}^{\infty }a_{k}\Psi
_{m+qk}\left(x_{1}\left( 1-t\right) ,x_{2},...,x_{r}\right) \Omega
_{\mu+pk}(y_{1},...,y_{r})\left( \frac{zt^{q}}{\left( 1-t\right) ^{q}}%
\right) ^{k} \\
&=(1-t)^{-\lambda -m}\ \Lambda _{m,q}\left( x_{1}\left(
1-t\right),x_{2},...,x_{r};y_{1},...,y_{r};\frac{zt^{q}}{\left( 1-t\right)
^{q}}\right) ,
\end{align*}
which completes the proof.
\end{proof}


\section{Special Cases}

As an application of the above Theorems, when the multivariable function $%
\Omega _{\mu +\psi k}(y_{1},...,y_{s})\,,$\ $k\in N_{0},\,s\in N$, is
expressed in terms of simpler functions of one and more variables, then we
can give further applications of the above theorems. We first set 
\begin{equation*}
s=r,\ \ \text{\ }\Omega _{\mu +\psi k}(y_{1},...,y_{r}\,)=u_{\mu +\psi
k}^{(\alpha _{1},...,\alpha _{r})}(y_{1},...,y_{r})
\end{equation*}
in Theorem \ref{thm:6}, where the Erkus-Srivastava polynomials $%
u_{n}^{(\alpha_{1},...,\alpha _{r})}(x_{1},...,x_{r})$, generated by \cite%
{E-S} 
\begin{equation}
\sum \limits_{n=0}^{\infty }u_{n}^{(\alpha
_{1},...,\alpha_{r})}(x_{1},...,x_{r})\frac{t^{n}}{n!}=\overset{r}{\underset{%
j=1}{\prod }}\ \{(1-x_{j}t^{m_{j}})^{-\alpha _{j}}\}.  \label{d1}
\end{equation}

We are thus led to the following result which provides a class of bilateral
generating functions for the $^{\left( k\right) }E^{\left( r\right) }$
multivariable hypergeometric functions and Erkus-Srivastava polynomials
respectively defined by (\ref{b1}) and generated by (\ref{d1}).

\begin{corollary}
\label{cor:5} If 
\begin{align*}
\Lambda _{\mu ,\psi }(y_{1},...,y_{r};\zeta ) &:=\sum
\limits_{k=0}^{\infty}a_{k}u_{\mu +\psi k}^{(\alpha
_{1},...,\alpha_{r})}(y_{1},...,y_{r})\zeta^{k} \\
(a_{k} \neq &0,\, \, \mu ,\psi \in \mathbb{C})
\end{align*}
then, we have 
\begin{align}
\sum \limits_{n=0}^{\infty }&\sum \limits_{k=0}^{[n/p]}a_{k}\ \left(
\lambda\right) _{n-pk}\ ^{\left( k\right) }E\ ^{\left( r\right) }\left( 
{\small \lambda +n-pk,\beta }_{k+1}{\small ,...,\beta }_{r}{\small ;\gamma }%
_{1}{\small ,...,\gamma }_{r}{\small ;x}_{1}{\small ,...,x}_{r}\right) 
\notag \\
&\times ~u_{\mu +\psi k}^{(\alpha _{1},...,\alpha _{r})}(y_{1},...,y_{r})%
\frac{\eta ^{k}}{t^{pk}}\frac{t^{n}}{(n-pk)!}  \notag \\
&=~^{\left( k\right) }E^{\left( r\right) }\left( {\small \lambda ,\beta }%
_{k+1}{\small ,...,\beta }_{r}{\small ;\gamma }_{1}{\small ,...,\gamma }_{r}%
{\small ;\frac{x_{1}}{\left( 1-t\right) ^{\rho }},...,\frac{x_{k}}{\left(
1-t\right) ^{\rho }},\frac{x_{k+1}}{\left(1-t\right) },...,}\frac{x_{r}}{%
\left( 1-t\right) }\right)  \notag \\
&\times (1-t)^{-\lambda } ~\Lambda _{\mu ,\psi }(y_{1},...,y_{r};\eta ),
\label{d2}
\end{align}
provided that each member of (\ref{d2}) exists.
\end{corollary}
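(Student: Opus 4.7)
The plan is to obtain Corollary \ref{cor:5} as a direct specialization of Theorem \ref{thm:6}. Specifically, I would set $s = r$ and choose
$$\Omega_{\mu + \psi k}(y_1, \ldots, y_r) := u_{\mu + \psi k}^{(\alpha_1, \ldots, \alpha_r)}(y_1, \ldots, y_r)$$
in the general framework of that theorem. Before invoking Theorem \ref{thm:6}, I should verify the standing hypothesis that this $\Omega$-family is identically non-vanishing. This is immediate from the generating relation (\ref{d1}): expanding $\prod_{j=1}^{r}(1 - y_j t^{m_j})^{-\alpha_j}$ as a formal power series in $t$ produces, for each $k$, a polynomial in $(y_1, \ldots, y_r)$ with strictly positive leading term in at least one monomial (whenever the $\alpha_j$ avoid a discrete set of degenerate values), so $u_{\mu+\psi k}^{(\alpha_1,\ldots,\alpha_r)}$ is not the zero function.

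Once this is in place, the definition of $\Lambda_{\mu,\psi}(y_1,\ldots,y_r;\eta)$ stated in the corollary matches verbatim the $\Lambda_{\mu,\psi}$ appearing in Theorem \ref{thm:6} under the above choice. Unwinding the definition of $\Theta_{n,p}^{\mu,\psi}$ from Theorem \ref{thm:6}, I observe that the left-hand side of (\ref{d2}) is precisely
$$\sum_{n=0}^{\infty} \Theta_{n,p}^{\mu,\psi}\!\left(x_1,x_2;y_1,\ldots,y_r;\frac{\eta}{t^p}\right) t^n$$
for the chosen $\Omega$. Applying the conclusion (\ref{c1}) of Theorem \ref{thm:6} then yields the right-hand side of (\ref{d2}) directly, without any further manipulation.

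I do not anticipate a genuine obstacle: the result is essentially a substitution exercise, and the heavy lifting has already been done in Theorem \ref{thm:6}. The only mildly delicate points are (i) confirming the non-vanishing hypothesis for the Erkus--Srivastava family, which as noted above follows from (\ref{d1}), and (ii) observing that the convergence/existence caveat ``provided that each member of (\ref{d2}) exists'' is inherited automatically from the corresponding proviso on (\ref{c1}), together with the domain of convergence of (\ref{d1}) in the variables $(y_1,\ldots,y_r,\eta)$. Thus the entire proof consists of the specialization and a one-line appeal to Theorem \ref{thm:6}.
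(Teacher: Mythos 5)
Your proposal is correct and takes essentially the same route as the paper: the paper obtains Corollary~\ref{cor:5} precisely by setting $s=r$ and $\Omega _{\mu +\psi k}(y_{1},...,y_{r})=u_{\mu +\psi k}^{(\alpha _{1},...,\alpha _{r})}(y_{1},...,y_{r})$ in Theorem~\ref{thm:6} and reading off (\ref{c1}) with $\xi =\eta /t^{p}$, exactly as you do. Your brief check of the identically non-vanishing hypothesis via (\ref{d1}) is a small addition the paper omits, but it does not change the argument.
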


\begin{remark}
\label{rem:13} Using the generating relation (\ref{d1}) for Erkus-Srivastava
polynomials and getting $a_{k}=1,$\ $\mu =0,$\ $\psi =1$\ in Corollary \ref%
{cor:5}, we find that 
\begin{align*}
\sum \limits_{n=0}^{\infty }&\sum \limits_{k=0}^{[n/p]}\left( \lambda\right)
_{n-pk}\ \ ^{\left( k\right) }E\ ^{\left( r\right) }\left( {\small \lambda
+n-pk,\beta }_{k+1}{\small ,...,\beta }_{r}{\small ;\gamma }_{1}{\small %
,...,\gamma }_{r}{\small ;x}_{1}{\small ,...,x}_{r}\right) \\
&\times \ u_{k}^{(\alpha _{1},...,\alpha_{r})}(y_{1},...,y_{r})\dfrac{%
\eta^{k}t^{n-pk}}{(n-pk)!} \\
&=(1-t)^{-\lambda }~^{\left( k\right) }E\ ^{\left( r\right) }\left( {\small %
\lambda ,\beta }_{k+1}{\small ,...,\beta }_{r}{\small ;\gamma }_{1}{\small %
,...,\gamma }_{r}{\small ;\frac{x_{1}}{\left( 1-t\right) ^{\rho }},...,\frac{%
x_{k}}{\left( 1-t\right) ^{\rho }},\frac{x_{k+1}}{\left(1-t\right) },...,}%
\frac{x_{r}}{\left( 1-t\right) }\right) \\
&\times \overset{r}{\underset{j=1}{\prod }}\ \{(1-y_{i}\eta
^{m_{j}})^{-\alpha _{i}}\}.
\end{align*}
\end{remark}

If we set 
\begin{equation*}
\Omega _{\mu +\psi k}(y_{1},...,y_{r}\,)=\ ^{\left( k\right) }E\ ^{\left(
r\right) }\left( {\small \lambda +(\mu +\psi k),\beta }_{k+1}{\small %
,...,\beta }_{r}{\small ;\gamma }_{1}{\small ,...,\gamma }_{r}{\small ;x}_{1}%
{\small ,...,x}_{r}\right).
\end{equation*}
or 
\begin{equation*}
\Omega _{\mu +\psi k}(y_{1},...,y_{r}\,)=\ ^{\left( k\right) }E\ ^{\left(
r\right) }\left( -{\small (\mu +\psi k),\beta }_{k+1}{\small ,...,\beta }_{r}%
{\small ;\gamma }_{1}{\small ,...,\gamma }_{r}{\small ;x}_{1}{\small ,...,x}%
_{r}\right)
\end{equation*}
in Theorem \ref{thm:6}, \ref{thm:7} and \ref{thm:8} we have bilinear
generating function relations for the multivariable hypergeometric functions.

On the other hand choosing 
\begin{equation*}
s=r,\text{\ \ \ }\Omega _{\mu +\psi k}(y_{1},y_{2})=\ ^{\left( k\right)
}H_{4}^{\left( r\right) }\left( -\left( \mu +\psi k\right) {\small ,\beta }%
_{k+1}{\small ,...,\beta }_{r}{\small ;\gamma }_{1}{\small ,...,\gamma }_{r}%
{\small ;x}_{1}{\small ,...,x}_{r}\right)
\end{equation*}
in Theorem ~\ref{thm:7}, where the multivariable fourth kind Horn functions 
\cite{E}, generated by 
\begin{align}
\sum \limits_{n=0}^{\infty }&\frac{(\lambda )_{n}}{n!}~^{\left( k\right)
}H_{4}^{\left( r\right) }\left( -{\small n,\beta }_{k+1}{\small ,...,\beta }%
_{r}{\small ;\gamma }_{1}{\small ,...,\gamma }_{r}{\small ;x}_{1}{\small %
,...,x}_{r}\right) t^{n}  \label{d3} \\
& =(1-t)^{-\lambda }~^{\left( k\right) }H\ _{4}^{\left( r\right) }\left( 
{\small \lambda ,\beta }_{k+1}{\small ,...,\beta }_{r}{\small ;\gamma }_{1}%
{\small ,...,\gamma }_{r}{\small ;\frac{x_{1}t^{2}}{\left( 1-t\right) ^{2}}%
,...,\frac{x_{k}t^{2}}{\left( 1-t\right) ^{2}},\frac{-x_{k+1}t}{%
\left(1-t\right) },...,}\frac{-x_{r}t}{\left( 1-t\right) }\right) ,  \notag
\end{align}
where $\left \vert t\right \vert <1$.

We are thus led to the following result which provides a class of bilateral
generating functions for the multivariable hypergeometric functions $%
^{\left( k\right) }E^{\left( r\right) }$\ and the multivariable fourth kind
Horn functions.

\begin{corollary}
\label{cor:6} If 
\begin{align*}
\Lambda _{\mu ,\psi }(y_{1},...,y_{r};\zeta ) &:=\sum
\limits_{k=0}^{\infty}a_{k}\ \ ^{\left( k\right) }H\ _{4}^{\left( r\right)
}\left( -\left( \mu+\psi k\right) {\small ,\beta }_{k+1}{\small ,...,\beta }%
_{r}{\small ;\gamma }_{1}{\small ,...,\gamma }_{r}{\small ;y}_{1}{\small %
,...,y}_{r}\right)\zeta ^{k}~ \\
(a_{k} &\neq 0,\, \, \ \mu ,\psi \in \mathbb{C})
\end{align*}
then, we have 
\begin{align}
\sum \limits_{n=0}^{\infty }&\sum \limits_{k=0}^{[n/p]}a_{k}\
(\lambda_{1})_{n-pk}\ \ ^{\left( k\right) }E\ ^{\left( r\right) }\left( 
{\small -n-pk,\beta }_{k+1}{\small ,...,\beta }_{r}{\small ;\gamma }_{1}%
{\small ,...,\gamma }_{r}{\small ;x}_{1}{\small ,...,x}_{r}\right)  \notag \\
&\times \ ^{\left( k\right) }H\ _{4}^{\left( r\right) }\left( -\left(
\mu+\psi k\right) {\small ,\beta }_{k+1}{\small ,...,\beta }_{r}{\small %
;\gamma}_{1}{\small ,...,\gamma }_{r}{\small ;y}_{1}{\small ,...,y}%
_{r}\right) \frac{\eta ^{k}t^{n-pk}}{\left( n-pk\right) !\ }  \notag \\
&=(1-t)^{-\lambda _{1}}\ ^{\left( k\right) }E\ ^{\left( r\right) }\left( 
{\small \lambda }_{1}{\small ,\beta }_{k+1}{\small ,...,\beta }_{r}{\small %
;\gamma }_{1}{\small ,...,\gamma }_{r}{\small ;\frac{x_{1}\left( -t\right)
^{\rho }}{\left(1-t\right) ^{\rho }},...,\frac{x_{k}\left( -t\right) ^{\rho }%
}{\left(1-t\right) ^{\rho }},\frac{-x_{k+1}t}{\left( 1-t\right) },...,}\frac{%
-x_{r}t}{\left( 1-t\right) }\right)  \notag \\
&\times \text{ }\Lambda _{\mu ,\psi }(y;\eta )  \label{d4}
\end{align}
provided that each member of (\ref{d4}) exists.
\end{corollary}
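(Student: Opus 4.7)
The plan is to recognize Corollary~\ref{cor:6} as a direct specialization of Theorem~\ref{thm:7} rather than a new computation. The idea is that the hypothesis of Theorem~\ref{thm:7} accepts an arbitrary identically non-vanishing family $\Omega_{\mu+\psi k}(y_1,\ldots,y_s)$, and the right-hand side of \eqref{c2} is exactly what the corollary claims once we make a specific choice of $\Omega$.

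Concretely, I would set $s=r$ and choose
\[
\Omega_{\mu+\psi k}(y_1,\ldots,y_r) := \ ^{(k)}H_4^{(r)}\bigl(-(\mu+\psi k),\beta_{k+1},\ldots,\beta_r;\gamma_1,\ldots,\gamma_r;y_1,\ldots,y_r\bigr).
\]
This family is a well-defined multivariable Horn-type function, identically non-vanishing in the relevant domain, so it meets the admissibility requirement of Theorem~\ref{thm:7}. With this choice, the generating series $\Lambda_{\mu,\psi}(y_1,\ldots,y_r;\zeta)=\sum_{k=0}^\infty a_k \Omega_{\mu+\psi k}(y)\zeta^k$ coincides verbatim with the $\Lambda_{\mu,\psi}$ appearing in the hypothesis of Corollary~\ref{cor:6}.

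Next I would form the auxiliary expression $\Theta_{n,p}^{\mu,\psi}$ of Theorem~\ref{thm:7} with this $\Omega$ and specialize $\xi=\eta/t^{p}$, so that $\xi^{k}t^{n}=\eta^{k}t^{n-pk}$. Substituting into \eqref{c2} shows that the double sum $\sum_{n\ge 0}\Theta_{n,p}^{\mu,\psi}(x_1,x_2;y_1,\ldots,y_r;\eta/t^{p})\,t^{n}$ matches the left-hand side of \eqref{d4} term-by-term (the $k$-th outer factor becomes $a_{k}\,(\lambda_1)_{n-pk}\,{}^{(k)}E^{(r)}(-n-pk,\ldots;x)\cdot {}^{(k)}H_4^{(r)}(-(\mu+\psi k),\ldots;y)\cdot \eta^{k}t^{n-pk}/(n-pk)!$, which is precisely what is written under the double sum). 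Theorem~\ref{thm:7} then evaluates this to
\[
(1-t)^{-\lambda_1}\ ^{(k)}E^{(r)}\!\left(\lambda_1,\beta_{k+1},\ldots,\beta_r;\gamma_1,\ldots,\gamma_r;\tfrac{x_{1}(-t)^{\rho}}{(1-t)^{\rho}},\ldots,\tfrac{-x_{r}t}{1-t}\right)\cdot \Lambda_{\mu,\psi}(y;\eta),
\]
which is the right-hand side of \eqref{d4}.

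The only genuine issue to address is the caveat already present in the statement, namely that \emph{each member of \eqref{d4} exists}. In practice this amounts to choosing $|t|$ small, the arguments $x_j$ inside the region of absolute convergence of $^{(k)}E^{(r)}$ (as given by Definition~\ref{def:1}), the $y_j$ inside the convergence region of the Horn series ${}^{(k)}H_4^{(r)}$, and the coefficient sequence $\{a_k\}$ and variable $\eta$ such that $\Lambda_{\mu,\psi}(y;\eta)$ converges absolutely; then the rearrangement of the double sum that is invoked implicitly through Theorem~\ref{thm:7} is justified by Fubini, and no further computation is needed. I do not expect a serious obstacle: the proof reduces to bookkeeping of the specialization, and the substantive work was already carried out in the proof of Theorem~\ref{thm:7}.
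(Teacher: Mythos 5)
Your proposal is correct and matches the paper's own route exactly: the paper derives Corollary~\ref{cor:6} precisely by setting $s=r$ and $\Omega_{\mu+\psi k}(y_{1},\ldots,y_{r})={}^{(k)}H_{4}^{(r)}\bigl(-(\mu+\psi k),\beta_{k+1},\ldots,\beta_{r};\gamma_{1},\ldots,\gamma_{r};y_{1},\ldots,y_{r}\bigr)$ in Theorem~\ref{thm:7} and substituting $\xi=\eta/t^{p}$, with no additional argument beyond the existence caveat you also note.
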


\begin{remark}
\label{rem:14} Using the generating relation (\ref{d3}) for multivariable
fourth kind Horn functions and getting $a_{k}=\frac{\left( \lambda
_{2}\right) _{k}}{k!},$\ $\mu =0,$\ $\psi =1,$\ in Corollary ~\ref{cor:6},
we find that 
\begin{align*}
\sum \limits_{n=0}^{\infty }&\sum \limits_{k=0}^{[n/p]}\ \left(
\lambda_{2}\right) _{k}\ (\lambda _{1})_{n-pk}\ \ ^{\left( k\right) }E\
^{\left(r\right) }\left( {\small -n-pk,\beta }_{k+1}{\small ,...,\beta }_{r}%
{\small ;\gamma }_{1}{\small ,...,\gamma }_{r}{\small ;x}_{1}{\small ,...,x}%
_{r}\right) \\
&\times \ ^{\left( k\right) }H\ _{4}^{\left( r\right) }\left( -k{\small %
,\beta }_{k+1}{\small ,...,\beta }_{r}{\small ;\gamma }_{1}{\small %
,...,\gamma }_{r}{\small ;y}_{1}{\small ,...,y}_{r}\right) \frac{\eta
^{k}t^{n-pk}}{k!\left( n-pk\right) !} \\
&=\left( 1-t\right) ^{-\lambda _{1}}\ ^{\left( k\right) }E\ ^{\left(
r\right)}\left( {\small \lambda }_{1}{\small ,\beta }_{k+1}{\small %
,...,\beta }_{r}{\small ;\gamma }_{1}{\small ,...,\gamma }_{r}{\small ;\frac{%
x_{1}\left(-t\right) ^{\rho }}{\left( 1-t\right) ^{\rho }},...,\frac{%
,x_{k}\left(-t\right) ^{\rho }}{\left( 1-t\right) ^{\rho }},\frac{-x_{k+1}t}{%
\left(1-t\right) },...,}\frac{-x_{r}t}{\left( 1-t\right) }\right) \\
&\times (1-\eta )^{-\lambda _{2}}~^{\left( k\right) }H\ _{4}^{\left(
r\right)}\left( \lambda _{2}{\small ,\beta }_{k+1}{\small ,...,\beta }_{r}%
{\small ;\gamma }_{1}{\small ,...,\gamma }_{r}{\small ;\frac{y_{1}\eta ^{2}}{%
\left(1-\eta \right) ^{2}},...,\frac{y_{k}\eta ^{2}}{\left( 1-\eta \right)
^{2}},\frac{-y_{k+1}\eta }{\left( 1-\eta \right) },...,}\frac{-y_{r}\eta }{%
\left(1-\eta \right) }\right).
\end{align*}
\end{remark}

Furthermore, for every suitable choice of the coefficients $a_{k}\, \,(k\in
N_{0}),$\ if the multivariable functions $\Omega _{\mu +\psi
k}(y_{1},...,y_{r})$, $r\in N$, are expressed as an appropriate product of
several simpler functions, the assertions of \ Theorems \ref{thm:6}, \ref%
{thm:7}, \ref{thm:8} and \ref{thm:9} can be applied in order to derive
various families of multilinear and multilateral generating functions for
the multivariable hypergeometric functions $^{\left( k\right) }E^{\left(
r\right) }$\ defined by (\ref{b1}).


\bibliographystyle{plain}
\bibliography{mmnsample}

\end{document}